\title{\LARGE \bf
Duality between polyhedral approximation of value functions and optimal quantization of measures
}
\author{Abdellah Bulaich Mehamdi, Wim van Ackooij, Luce Brotcorne, St\'ephane Gaubert and Quentin Jacquet
\thanks{Abdellah Bulaich Mehamdi, Wim van Ackooij and Q. Jacquet are with EDF Lab Paris-Saclay, Palaiseau, France {\tt \{abdellah.bulaich-mehamdi, wim.van-ackooij, quentin.jacquet\}@edf.fr}}%
\thanks{Luce Brotcorne is with INRIA {\tt luce.brotcorne@inria.fr}}%
\thanks{Abdellah Bulaich Mehamdi and St\'ephane Gaubert are with INRIA and CMAP, \'Ecole polytechnique, Institut Polytechnique de Paris, Palaiseau, France {\tt stephane.gaubert@inria.fr}}%
}
\newcommand{\enewton}{\mathscr{P}}
\definecolor{myblue}{RGB}{0, 102, 204} 
\definecolor{myred}{RGB}{204, 0, 0} 
\definecolor{myorange}{RGB}{255, 140, 0} 
\definecolor{mygreen}{RGB}{0, 153, 76} 
\definecolor{myblack}{RGB}{20, 20, 20} 
\newtheorem{theorem}{Theorem}
\newtheorem{corollary}{Corollary}
\newtheorem{proposition}{Proposition}
\newtheorem{definition}{Definition}
\theoremstyle{definition}
\newtheorem{remark}{Remark}
\newcommand{\val}{\operatorname{val}}
\newcommand{\argmax}{\operatornamewithlimits{argmax}}
\newcommand{\MA}{\operatorname{MA}}
\newcommand{\R}{\mathbb{R}}
\newcommand{\x}{\mathbf{x}}
\begin{document}

\maketitle

\begin{abstract}
Approximating a convex function by a polyhedral function that has a limited number of facets is a fundamental problem with applications in various fields, from mitigating the curse of dimensionality in optimal control to bi-level optimization. We establish a connection between this problem and the optimal quantization of a positive measure. Building on recent stability results in optimal transport, by Delalande and M\'erigot, we deduce that the polyhedral approximation of a convex function is equivalent to the quantization of the Monge-Amp\`ere measure of its Legendre-Fenchel dual. This duality motivates a simple greedy method for computing a parsimonious approximation of a polyhedral convex function, by clustering the vertices of a Newton polytope. We evaluate our algorithm on two applications: 1) A high-dimensional optimal control problem (quantum gate synthesis), leveraging McEneaney's max-plus-based curse-of-dimensionality attenuation method; 2) A bi-level optimization problem in electricity pricing. Numerical results demonstrate the efficiency of this approach.
\end{abstract}

\section{Introduction}
\subsection{Motivation}\label{subsec-motiv}
Polyhedral approximation is a fundamental technique in computational geometry and optimization, particularly in high-dimension. By approximating complex convex bodies with simpler polytopes or functions, one can reduce computational complexity while preserving an accurate representation of the original set. This approach is widely used in various mathematical and applied fields, including optimal control, program verification, and economic modeling.

Notably, McEneaney introduced a max-plus method to approximate the value function of an optimal control problem~\cite{McEneaney2006}. His approach represents the value function as a supremum or ``max-plus linear combination'' of elementary basis functions. In particular, affine basis functions lead to a polyhedral approximation of the value function. Several max-plus type methods have been developed~\cite{flemingmceneaney,McEneaney_2007,Akian_2008}; the methods developed following~\cite{McEneaney_2007} have the advantage to attenuate the curse of dimensionality. An essential ingredient here is the {\em pruning} of the polyhedral representation, that is, given a fine polyhedral approximation, the construction of a new - reduced complexity -approximation, minimizing the ``pruning error''~\cite{qu-gaubert-2011}. A similar pruning problem arises when implementing the SDDP approach~\cite{Pereira}, in which polyhedral approximations of the value function are also constructed.

A related pruning problem appears in bi-level optimization, particularly in adverse selection models studied by Rochet and Chon\'e~\cite{rochet-chone-1998}. In optimal nonlinear pricing, firms must optimize a function under incentive compatibility constraints, leading to convexity constraints and a setting in which polyhedral approximation plays a crucial role in solving the problem efficiently by discretization~\cite{carlier-lrobert-maury-2000, ekeland-mbromberg-2009, bergemann-yeh-zhang-2021}. 

\subsection{Contribution}
We relate the optimal polyhedral approximation of a convex function and the optimal quantization of a probability measure, exploiting a duality between both problems. More precisely, we show that the approximation error of a convex function by a polyhedral function with a prescribed budget (number of facets) can be controlled by the quantization error of the Monge-Amp\`ere measure associated with its Legendre–Fenchel dual, and vice versa; see Theorem~\ref{theorem:principal}. In particular, this result highlights how tools from measure theory provide a framework for understanding the polyhedral approximation problem. Corollary~\ref{corollary:principal} further clarifies the connection and describes how an $\epsilon$-approximate solution to one problem can be derived from the solution to the other. This builds on recent stability results in optimal transport, by Delalande an M\'erigot~\cite{delalande:tel-03935445, delalande2023quantitative}. Like optimal polyhedral approximation, optimal quantization is subject to a curse of dimensionality. Nevertheless, we exploit the duality between these two problems to inspire a new pruning approach (Algorithm~\ref{algorithm:dqh}). Instead of solving exactly the quantization problem, we perform a clustering of the vertices of the lifted Newton polytope of the Legendre–Fenchel transform of a given polyhedral function. We solve this clustering problem by a $k$-center greedy algorithm which provides an optimal solution up to a factor $2$~\cite{Gonzalez1985ClusteringTM}. Proposition~\ref{proposition:newton} shows that the final approximation error is controlled in terms of the clustering error. 

Then, we assess the proposed pruning method by conducting experiments on an optimal control problem and a nonlinear pricing problem. For the control problem, we consider quantum gate synthesis for a 2-qubit problem, following the original approach introduced by Sridharan et al.~\cite{Sridharan_2010,Sridharan2014}. The authors developed a max-plus based method attenuating the curse of dimensionality for this challenging optimal control problem, where the challenge stems from its high dimension (15) and subriemannian nature. This method propagates a sequence of piecewise linear functions defined over the group of special-unitary matrices. However, the ``pruning operation'' is the bottleneck of the method, as it relies on an importance metric that requires a {\em semi-definite programming (SDP)} to be solved, see~\cite{Gaubert_2014}. For the nonlinear pricing problem, we consider an application in electricity market where the tariffs are derived from the discretized linear-quadratic Rochet-Chon\'e model~\cite{rochet-chone-1998,carlier-lrobert-maury-2000,bergemann-yeh-zhang-2021}. This results in a very large set of offers, which is difficult to represent efficiently to the clients. We applied our algorithm to select a subset of the tariffs (the menu), giving a more efficient representation while minimizing revenue deterioration (see~\cite{jacquet-ackooij-2023}).

In both applications, the new pruning algorithm demonstrates strong performance in terms of both computational time and solution quality. In particular, by comparison with earlier pruning methods based on importance metrics~\cite{Sridharan_2010, Sridharan2014, Gaubert_2014,jacquet-ackooij-2023}, we get an improved solution.

\subsection{Related work}
As discussed in \Cref{subsec-motiv}, a strong motivation to study the best polyhedral approximation problem arises from the attenuation of curse of dimensionality in optimal control, and especially from max-plus type methods, developed after Fleming and McEneaney~\cite{flemingmceneaney}, by several authors, see~\cite{Akian_2008,Sridharan_2010,Sridharan2014, Gaubert_2014,mceneaneydower}. Similar polyhedral approximation problems arise in the implementation of stochastic dual dynamic programming, see~\cite{Pereira, Philpott, leclere}. Recently, the polyhedral approximation problem has been studied with motivations of mathematical economy and bi-level programming, in the setting of the Rochet-Chon\'e model~\cite{bergemann-yeh-zhang-2021, jacquet-ackooij-2023}.

Beyond applications in control theory and economics, polyhedral approximation issues arise in convex geometry. Then, one is interested in approximating a convex body by a polytope with a prescribed number $n$ of facets. For strictly convex bodies with a $\mathcal{C}^2$ boundary, the approximation error decreases as $O(\frac{1}{n^{2/(d-1)}}) \text{ as } n \to \infty$, where $d$ is the dimension of the ambient space (see~\cite[Chapter 11]{gruber2007convex}, and~\cite{bronstein-2008}). The constant involved in the big-O involves a ``Gaussian curvature'', showing that the approximation is easier in ``flat'' regions of the boundary of the convex body.

Previous numerical studies have focused on {\em pruning} polyhedral approximations—that is, refining polyhedral representations of functions—to improve representational efficiency. Gaubert, McEneaney and Qu showed that the pruning problem can be interpreted as a continuous-space facility location problem, using a specific Bregman-type distance~\cite{qu-gaubert-2011}. They also adapted the approximation results known for convex bodies to the case of functions, showing that the approximation error is of order $O(\frac{1}{n^{2/d}})$ for smooth and strictly convex functions, with an implied constant also of Gaussian curvature type.

This interpretation in terms of facility location leads to the minimization of a supermodular function—an optimization problem that has been extensively studied in the literature~\cite{krause2014submodular, boutsidis2015greedyminimizationweaklysupermodular}. In practice, greedy heuristics are widely adopted due to their simplicity and scalability. Several specific pruning algorithms were developed~\cite{4587234,qu-gaubert-2011,jacquet-ackooij-2023}, building on notions of ``importance metric'', quantifying the notion of most useful function among a given family. Then, these metrics are exploited by heuristics, removing the less critical functions until a desired approximation budget or level of fidelity is achieved. Interestingly, importance metrics are (indirectly) related to the volume of the Laguerre cells arising in the optimal quantization problem, hence our results provide further grounds for the use of such metrics.

We finally note that the same order of error, $O(\frac{1}{n^{2/d}})$, appears in polyhedral approximation and in optimal quantization, see~\cite{Pags2004, merigot2021non}.

The paper is organized as follow. In Section~\ref{section:preliminaries}, we introduce the main tools and notation. In Section~\ref{section:principal}, we connect polyhedral approximation and measure quantization through the Monge-Amp\`ere measure and optimal transport, establishing their relationship. We then present our method for computing an inner approximations of a polyhedral function. Finally, in Section~\ref{section:applications}, we illustrate the method by a quantum gate synthesis problem and a nonlinear pricing problem originating from the electricity market.

\subsection{Acknowledgment}
The authors thank Quentin M\'erigot for very helpful discussions on stability results in optimal transport, which contributed to the development of this work.

\section{Preliminaries} \label{section:preliminaries}
We next introduce the notation and main tools.
\subsection{Notation}
For an integer number $n \geq 0$, we introduce the following notation:
\begin{itemize}
\item the unit simplex $\Delta_n = \{\lambda \in \R^n_+ \mid \sum_{k = 1}^n \lambda_k = 1 \}$.
\item the set $[n]$ is $\{1, \cdots, n\}$.
\item the set $P(K)$ of probability measures on a compact metric space $K$.
\item a compact convex set $Q \subset \R^d$.
\item a quadratic cost $c: (x,y) \mapsto \|x-y\|^2$.
\item the dirac measure $\delta_x$, assigning the mass $1$ to $x$ and $0$ elsewhere.
\item the operator $\val$ maps optimization problems to their optimal values.
\end{itemize}
Additionally, $\rho$ is a probability measure supported on a compact convex set $X \subset \R^d$, absolutely continuous with respect to the Lebesgue measure, with a density bounded from above and below by positive constants.

\subsection{Optimal transport}
Optimal transport is a mathematical framework for efficiently moving mass between two distributions while minimizing a given cost function. Originally introduced by Monge in 1781, it seeks to transport mass from a source distribution $\mu$ to a target distribution $\nu$ while minimizing a given cost function. The Monge formulation requires finding a transport map $T: \mathbb{R}^d \to \mathbb{R}^d$ that pushes $\mu$ onto $\nu$, i.e., $T_{\#} \mu = \nu$, and minimizes the transport cost:
\begin{equation*}
\min_{T} \int_{\mathbb{R}^d} c(x, T(x)) \, d\mu(x),
\end{equation*}
where $c(x, y)$ represents the cost of moving mass from $x$ to $y$. Monge’s original problem requires a deterministic transport, which may lead to non-existence of solutions unless additional regularity conditions are met. To overcome the non-existence problem, Kantorovich introduced a relaxed formulation where mass can be split. Instead of seeking a transport map, one looks for a transport plan $\gamma \in \Gamma(\mu, \nu)$, i.e., a probability measure on $\mathbb{R}^d \times \mathbb{R}^d$ with marginals $\mu$ and $\nu$. The Kantorovich problem is then:
\begin{equation*}
K(\mu,\nu) := \min_{\gamma \in \Gamma(\mu, \nu)} \int_{\mathbb{R}^d \times \mathbb{R}^d} c(x, y) \, d\gamma(x, y).
\end{equation*}
\begin{remark}
When $c$ is $c(x, y) = {\|x - y\|}_p^p$, $W_p(\mu,\nu) := K(\mu,\nu)^{\frac{1}{p}}$, defines a distance between $\mu$ and $\nu$, known as the Wasserstein distance of order $p$, with $p \in [1, \infty]$.
\end{remark}
\begin{definition}[{\cite{Villani2009}, Page 11}] \label{definition:push-forward}
If $\mu$ is a Borel measure on $\R^d$, and $T$ is a Borel map $\R^d \to \R^d$, then $T_\#\mu$ stands for the image measure (or \emph{push-forward}) of $\mu$ by $T$. It is a Borel measure on $\R^d$, defined by $T_\#\mu(A) = \mu(T^{-1}(A))$. 
\end{definition}
A fundamental result in optimal transport is Brenier’s theorem~\cite{brenier1991polar}, which provides a strong link between Monge’s and Kantorovich’s formulations when the cost function is the squared Euclidean distance, i.e., $c(x, y) = \|x - y\|^2$. Under mild regularity assumptions (e.g., $\mu$ is absolutely continuous with respect to the Lebesgue measure), there exists a unique optimal transport map $T$, which is the gradient of a convex function $\varphi$ such that $T = \varphi$ almost everywhere. The function $\varphi$ is called the \textit{Brenier potential} and solves the Monge-Amp\`ere equation.
\begin{theorem}[\cite{delalande:tel-03935445}, Theorem 5.12, and Remark 5.13] \label{theorem:delalande}
For any probability measures $\mu, \nu$ on $Q$, there exist two constants $C_L, C_R$ depending only on $\rho, d, X,$ and $Q$ such that:
\begin{align*}
C_L W_2(\mu, \nu)^3 \leq \| u - v \|_{L_2(\rho)} \leq C_R W_1(\mu, \nu)^\frac{1}{2}, 
\end{align*}
where $u$ and $v$ are the Brenier potentials associated with $\rho$ and the measures $\mu, \nu$, respectively.
\end{theorem}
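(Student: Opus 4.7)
The plan is to prove the two bounds independently. Throughout, I normalise so that $\int(u-v)\,d\rho=0$; since $\nabla u,\nabla v\in Q$ with $Q$ compact, both Brenier potentials are Lipschitz on $X$ with constant depending only on $\diam Q$, and hence uniformly bounded on $X$ by a constant depending only on $X$ and $Q$. This boundedness will be used in both directions.

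\textbf{Upper bound.} For $\|u-v\|_{L^2(\rho)}\leq C_R W_1(\mu,\nu)^{1/2}$, I would work with the Kantorovich dual functional $F_\eta(\phi)=\int\phi\,d\rho+\int\phi^{*}\,d\eta$, which is convex in $\phi$ and minimised (over convex $\phi$) at the Brenier potential of $\eta$. Adding the two suboptimality inequalities $F_\mu(v)\geq F_\mu(u)$ and $F_\nu(u)\geq F_\nu(v)$ and using the normalisation gives
\[
0 \;\leq\; F_\mu(v)-F_\mu(u) \;\leq\; \int (u^{*}-v^{*})\,d(\nu-\mu) \;\leq\; \diam(X)\cdot W_1(\mu,\nu),
\]
where the last step is Kantorovich--Rubinstein duality applied to $u^{*}-v^{*}$, which is $\diam(X)$-Lipschitz on $Q$ since $\nabla u^{*},\nabla v^{*}\in X$. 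A quadratic strong-convexity estimate at the minimiser of $F_\mu$, of the form $F_\mu(v)-F_\mu(u)\geq\kappa\,\|u-v\|_{L^2(\rho)}^{2}$ valid thanks to the lower density bound on $\rho$ and the compactness of $X,Q$, then yields the claim.

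\textbf{Lower bound.} For the deeper estimate $C_L W_2(\mu,\nu)^{3}\leq\|u-v\|_{L^2(\rho)}$, I would follow the Delalande--M\'erigot strategy in two steps. First, the elementary coupling bound $W_2^{2}(\mu,\nu)\leq\|\nabla u-\nabla v\|_{L^2(\rho)}^{2}$, obtained by transporting $\rho$ via $x\mapsto(\nabla u(x),\nabla v(x))$ onto a coupling of $\mu,\nu$. Second, a gradient-to-function interpolation reducing $\|\nabla(u-v)\|_{L^2(\rho)}^{2}$ to a power of $\|u-v\|_{L^2(\rho)}$. This second step proceeds by integrating by parts against the signed Monge-Amp\`ere measure $\mathrm{MA}(u)-\mathrm{MA}(v)$, whose total mass is bounded by $\mu(Q)+\nu(Q)=2$, to obtain $\|\nabla(u-v)\|_{L^2(\rho)}^{2}\leq C\,\|u-v\|_\infty$, and then controlling the $L^\infty$ factor by $\|u-v\|_{L^2(\rho)}$ raised to a suitable positive power via a concentration argument that combines the uniform Lipschitz bound on $u-v$ with the lower density bound on $\rho$.

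\textbf{Main obstacle.} The heart of the proof, and the main contribution of Delalande--M\'erigot, is the sharp exponent in the $L^\infty$-to-$L^2$ interpolation for $u-v$: a naive Sobolev-style chaining produces a dimension-dependent power, and obtaining the cubic bound $W_2^{3}$ requires exploiting the convex structure of the Brenier potentials globally (not merely their Lipschitz bound), together with the absolute continuity and two-sided density bounds on $\rho$. Correspondingly, the constants $C_L,C_R$ depend on $\rho,d,X,Q$ through the density bounds on $\rho$, $\diam Q$, $\diam X$, and Poincar\'e/isoperimetric-type constants of $X$.
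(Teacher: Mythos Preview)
This theorem is not proved in the paper: it is quoted from Delalande's thesis (Theorem~5.12 and Remark~5.13) and invoked as a black box in the proof of Theorem~\ref{theorem:principal}. There is therefore no in-paper argument to compare your proposal against.

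Measured against the actual Delalande--M\'erigot proof, your upper-bound sketch is essentially theirs: the Kantorovich functional, the Kantorovich--Rubinstein bound on $\int(u^*-v^*)\,d(\mu-\nu)$, and the strong-convexity (variance) inequality at the minimiser are exactly the ingredients. Your lower-bound sketch starts correctly with the coupling estimate $W_2^2(\mu,\nu)\le\|\nabla u-\nabla v\|_{L^2(\rho)}^2$, but the next step is problematic. You claim $\|\nabla(u-v)\|_{L^2(\rho)}^2\le C\|u-v\|_\infty$ by ``integrating by parts against $\MA(u)-\MA(v)$, whose total mass is bounded by $\mu(Q)+\nu(Q)=2$''. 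This conflates $\MA(u)$ with $\mu$: it is $\MA_\rho(u^*)$ that equals $\mu$, not $\MA(u)$, and in dimension $d>1$ the Monge--Amp\`ere operator is fully nonlinear, so the Laplacian-style integration by parts you have in mind does not produce $\|\nabla(u-v)\|^2$. Delalande--M\'erigot instead remain inside the Kantorovich-functional framework, expressing $\|\nabla u-\nabla v\|_{L^2(\rho)}^2$ through suboptimality gaps of $F_\mu$ and $F_\nu$ and controlling those by $\|u-v\|_{L^2(\rho)}$ via the same variance inequality, combined with an interpolation along the segment of convex potentials. Your diagnosis of the main obstacle is accurate, but the mechanism you propose for the gradient-to-function reduction would not go through as written.
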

\subsection{Monge-Amp\`ere measure}
\begin{definition}
The $\rho$-Monge-Amp\`ere measure $\MA_\rho(u)$ is defined by:
\begin{equation*} 
\MA(u)(E) := \rho(\partial u(E)), \quad \forall \text{ Borel set } E \subseteq \R^d,
\end{equation*}
where $\partial u(E) = \bigcup_{x \in E} \partial u(x)$. 
\end{definition}
\begin{remark}
When $\rho$ is the Lebesgue measure, and $u \in C^2(\R^d)$, the change of variable formula gives:
\begin{align*}
\rho(\partial u(E)) = \rho(\nabla u(E)) = \int_E \text{det} \, D^2 u(x) \, \rho(x) \, dx,
\end{align*}
for every Borel set $E \subseteq \R^d$, therefore $\MA_\rho(u) = \text{det} \, D^2 u(x) \, \rho(x) \, dx$~\cite{dephilippis2013mongeampereequationlinkoptimal}. 
\end{remark} 
\begin{definition} \label{definition:fenchel}
Consider a convex function $u : \R^d \rightarrow \R$, the convex conjugate or Legendre-Fenchel transform of $u$ is the function $u^*: \R^d \to \mathbb{R}$ defined by: 
\begin{equation*}
u^*(y) = \sup_{x \in \R^d} \big(y \cdot x - u(x)\big).
\end{equation*}
\end{definition}
\begin{remark} \label{remark:principal}
For any proper closed convex function $u$~\cite[Chapter 2]{vanAckooij_Oliveira_2025}, Theorem 23.5 in~\cite{rockafellar-1970} ensures that $\partial u^* = \partial u^{-1}$. This leads to $MA_\rho(u^*)(E) = \rho(\partial u^*(E)) = \rho(\partial u^{-1}(E))$, and by Definition~\ref{definition:push-forward}, we have that: $(\nabla u)_\# \rho = MA_\rho(u^*)$.
\end{remark}
Theorem~\ref{theorem:gu} provides a polyhedral solution $u_n$ to the Monge-Amp\`ere problem $\MA_\rho(u^*) = \sum_{i=1}^m \nu_k \, \delta_{q_k}$.
\begin{theorem}[\cite{gu2013variationalprinciplesminkowskitype}, Theorem 1.2] \label{theorem:gu}
Let $\{q_1, \cdots, q_n\}$ be a set of distinct points in $\R^d$. Then for any $\nu \in \Delta_n$, there exists $p=(p_1, \cdots, p_n) \in \R^n$, unique up to adding a constant $(c,..., c)$, such that:
\begin{equation*}
\int_{ \partial u_n^*(q_k) \cap X}\rho(x) \, dx = \nu_k, \forall k,
\end{equation*}
with $u_n: x \mapsto \max_{k \in [n]} \langle q_k, x \rangle - p_k$. Furthermore, $\nabla u_n$ minimizes the quadratic cost $\int_{X} |x - T(x)|^2 \, \rho \, dx$ among all transport maps $T: (X, \rho \, dx) \to (\R^d$, $\sum_{i=1}^n \nu_k \, \delta_{q_k})$. 
\end{theorem}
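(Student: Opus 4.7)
The plan is to prove Theorem~\ref{theorem:gu} through the variational principle of semi-discrete optimal transport. I would introduce the Kantorovich-type dual functional
\begin{equation*}
\Phi(p) := \sum_{k=1}^n \nu_k p_k + \int_X \max_{j \in [n]} \bigl(\langle q_j, x\rangle - p_j\bigr)\, \rho(x)\, dx, \quad p \in \R^n,
\end{equation*}
and show that (i) $\Phi$ is convex and invariant under $p \mapsto p + c\mathbf{1}$; (ii) its partial derivatives satisfy $\partial_{p_k} \Phi(p) = \nu_k - \rho(L_k(p) \cap X)$ for the Laguerre cell $L_k(p) := \{x : \langle q_k, x\rangle - p_k \geq \langle q_j, x\rangle - p_j,\ \forall j\}$; and (iii) $\Phi$ admits a minimizer $p^*$, unique modulo the direction $\mathbf{1}$. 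The first-order condition $\nabla \Phi(p^*) = 0$ is then precisely $\rho(L_k(p^*) \cap X) = \nu_k$, and a short check shows $\partial u_n^*(q_k) = L_k(p^*)$ for $u_n(x) = \max_j(\langle q_j, x\rangle - p_j^*)$, yielding the mass-balance identity of the theorem.

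The convexity of $\Phi$ is immediate, as $p \mapsto \max_j(\langle q_j, x\rangle - p_j)$ is convex for each fixed $x$ and integration preserves convexity; the invariance under global constant shifts is also direct. For the gradient formula I would use the absolute continuity of $\rho$: the ``tie set'' where the maximum is attained at several indices is contained in a finite union of hyperplane pieces and is therefore $\rho$-negligible, so an envelope or dominated-convergence argument yields the announced partial derivatives. Existence of a minimizer would be proved after descending to the quotient $\R^n / \R\mathbf{1}$: coercivity is checked by noting that if $p_k \to +\infty$ the linear term $\nu_k p_k$ dominates the integral (after removing indices with $\nu_k = 0$, whose cells may be declared empty), while if $p_k \to -\infty$ the cell $L_k(p)$ covers a region of $X$ and $u_n(x;p) \geq \langle q_k, x\rangle - p_k$ forces the integral to grow faster than $|\nu_k p_k|$ since $\rho$ is a probability measure and $\nu_k \leq 1$. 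Uniqueness modulo constants follows by showing that along any segment joining two minimizers $\Phi$ must be affine, which forces the induced Laguerre decompositions to coincide $\rho$-almost everywhere; the distinctness of the $q_k$ and the positive lower bound on the density of $\rho$ then identify the two potentials up to an additive constant.

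The optimality of $\nabla u_n$ for the quadratic transport cost finally follows from Brenier's theorem: $u_n$ is convex, its gradient is defined $\rho$-almost everywhere and equals $q_k$ on $L_k(p^*)$, and $(\nabla u_n)_\# (\rho\, dx) = \sum_k \nu_k \delta_{q_k}$ by the mass-balance condition, so $\nabla u_n$ is the (a.e.\ unique) optimal transport map. The hardest step will be the combined coercivity-and-uniqueness analysis on the quotient $\R^n/\R\mathbf{1}$, which requires one to carefully exploit both the positive lower bound on the density of $\rho$ and the distinctness of the $q_k$ to handle the degenerate cases where some $\nu_k$ vanish or where adjacent Laguerre cells would collapse onto one another.
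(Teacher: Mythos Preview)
The paper does not supply its own proof of this statement: Theorem~\ref{theorem:gu} is quoted as an external result from \cite{gu2013variationalprinciplesminkowskitype} and used as a black box in the proof of Theorem~\ref{theorem:principal}. There is therefore nothing in the present paper to compare your argument against.

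That said, your variational sketch via the Kantorovich dual functional $\Phi(p)=\sum_k \nu_k p_k + \int_X \max_j(\langle q_j,x\rangle - p_j)\,\rho(x)\,dx$ is exactly the standard route, and is in fact the method of the cited reference (hence its title). The outline---convexity, the envelope formula $\partial_{p_k}\Phi(p)=\nu_k-\rho(L_k(p)\cap X)$ using that tie sets are $\rho$-null, coercivity on $\R^n/\R\mathbf{1}$, first-order condition, then Brenier for the transport optimality---is correct. One point worth sharpening: the uniqueness ``up to adding a constant $(c,\dots,c)$'' really needs every $\nu_k>0$. If some $\nu_k=0$, the corresponding $p_k$ can be increased independently without affecting the mass-balance condition, so uniqueness modulo $\R\mathbf{1}$ fails in $\R^n$; your device of dropping those indices handles existence but not uniqueness in the original parameter space. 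This is a mild imprecision already present in the statement as quoted, not a flaw in your plan.
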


\section{Duality between Polyhedral Approximation and Quantization} \label{section:principal}
In this section, we describe a duality between the optimal polyhedral approximation of a convex function with respect to the distance $L_2(\rho)$ and the optimal quantization of a probability measure with respect to the Wasserstein distance. Then, inspired by this duality, we propose a practical method that combines {\em linear programming (LP)} or {\em semi-definite programming (SDP)} (depending on the situation at hand) with clustering of the vertices of a lifted Newton polytope to produce scalable polyhedral approximations.

\subsection{Connection between polyhedral approximation and optimal transport}
Given a set $Q$ and a budget $n$, the optimal polyhedral approximation problem for a proper closed convex function $u$ with a prescribed budget and subject to subgradient constraints, with respect to the distance $L_2(\rho)$, can be formulated as $\mathcal{P}_n(u)$:
\begin{align} \label{problem:polyhedral} 
\min_{(q_k, p_k)_{k=1}^n \in Q \times \mathbb{R}} &{\left(\int_X (u(x) - u_n(x))^2 \, \rho(x) \, dx\right)}^\frac{1}{2} \\
\text{s.t.:} &\int_{X} (u(x) - u_n(x)) \, \rho(x) \, dx = 0, \notag
\end{align}
where $u_n: x \mapsto \max_{k \in [n]} \langle q_k, x \rangle - p_k$. This can be compared with the optimal quantization of a probability measure $\mu$ on $Q$ with a prescribed budget $n$ with respect to the Wasserstein-$p$ distance, $\mathcal{Q}_n^p(\mu)$:
\begin{align} \label{problem:quantization}
\min_{\nu \in P(Q), |\text{supp}(\nu)| \leq n} W_p(\mu, \nu).
\end{align}
Theorem~\ref{theorem:principal} relates the optimal polyhedral approximation of a convex function $u$ and the optimal quantization of the Monge-Amp\`ere measure associated with its Legendre–Fenchel transform, and provides the connection between their solutions. Figure~\ref{fig:framework} visually summarizes this duality.
\begin{theorem} \label{theorem:principal}
Let $u: \R^d \rightarrow \R$ be a proper closed convex function, and $\mu = \MA_\rho(u^*)$. Then the problems~\eqref{problem:polyhedral} and~\eqref{problem:quantization} are equivalent, i.e., there exists constants $C_L, C_R>0$ that depend only on $\rho$, $d$, $X$, and $Q$ such that:
\begin{equation*}
C_L (\val\mathcal{Q}_n^2(\mu))^3 \leq \val(\mathcal{P}_n(u)) \leq C_R (\val\mathcal{Q}_n^1(\mu))^\frac{1}{2}. \label{eq:key}
\end{equation*}
\end{theorem}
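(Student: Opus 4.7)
The plan is to reduce both inequalities to the stability estimate of Delalande and M\'erigot (Theorem~\ref{theorem:delalande}) by exploiting the correspondence, provided by Remark~\ref{remark:principal} and Theorem~\ref{theorem:gu}, between polyhedral convex functions with at most $n$ pieces and discrete probability measures on $Q$ with at most $n$ atoms. Concretely, a polyhedral function $u_n(x) = \max_{k \in [n]} \langle q_k, x\rangle - p_k$ is (up to an additive constant) the Brenier potential from $\rho$ to its Monge-Amp\`ere measure $\MA_\rho(u_n^*) = (\nabla u_n)_\# \rho$, which is automatically discrete and supported in the slope set $\{q_1,\ldots,q_n\} \subseteq Q$. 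Conversely, Theorem~\ref{theorem:gu} produces, from any discrete measure $\nu = \sum_k \nu_k \delta_{q_k}$ with atoms in $Q$, a polyhedral Brenier potential transporting $\rho$ to $\nu$. The additive-constant freedom of Brenier potentials is absorbed by the zero-mean condition $\int_X (u - u_n)\rho\, dx = 0$, which therefore acts as a normalization rather than a true restriction.

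For the lower bound $C_L (\val \mathcal{Q}_n^2(\mu))^3 \le \val \mathcal{P}_n(u)$, I would take an optimal $u_n^\star$ of $\mathcal{P}_n(u)$ and set $\nu^\star := \MA_\rho((u_n^\star)^*)$. Since $u_n^\star$ has at most $n$ distinct slopes, all lying in $Q$, the measure $\nu^\star$ belongs to $P(Q)$ with support of size at most $n$, hence is feasible for $\mathcal{Q}_n^2(\mu)$; consequently $W_2(\mu, \nu^\star) \ge \val \mathcal{Q}_n^2(\mu)$. Applying the left inequality of Theorem~\ref{theorem:delalande} to the Brenier potentials $u$ and $u_n^\star$ (well-defined thanks to the zero-mean normalization) yields $C_L W_2(\mu, \nu^\star)^3 \le \|u - u_n^\star\|_{L_2(\rho)} = \val \mathcal{P}_n(u)$, and chaining the two inequalities gives the result.

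For the upper bound $\val \mathcal{P}_n(u) \le C_R (\val \mathcal{Q}_n^1(\mu))^{1/2}$, I would pick an optimizer $\nu^\star = \sum_{k=1}^n \nu_k \delta_{q_k}$ of $\mathcal{Q}_n^1(\mu)$ and invoke Theorem~\ref{theorem:gu} to build a polyhedral function $u_n$ of the required form with $\MA_\rho(u_n^*) = \nu^\star$; by Remark~\ref{remark:principal}, $u_n$ is then (up to a constant) the Brenier potential from $\rho$ to $\nu^\star$. After fixing the constant so the zero-mean condition holds, $u_n$ is feasible for $\mathcal{P}_n(u)$. The right inequality of Theorem~\ref{theorem:delalande}, applied to the pair $(\mu, \nu^\star)$, then gives $\val \mathcal{P}_n(u) \le \|u - u_n\|_{L_2(\rho)} \le C_R W_1(\mu, \nu^\star)^{1/2} = C_R (\val \mathcal{Q}_n^1(\mu))^{1/2}$.

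The main difficulty is essentially bookkeeping rather than conceptual: one must verify that the hypotheses of Theorem~\ref{theorem:delalande} are satisfied throughout (the measures $\mu, \nu^\star$ must lie in $P(Q)$, ensured on one side by the constraint $q_k \in Q$ and on the other by $\nu^\star \in P(Q)$), that the Brenier potentials are normalized consistently so the $L_2(\rho)$ comparison is unambiguous (handled by the zero-mean condition), and that degenerate cases—optimal quantizations with coinciding atoms, or a $\mu$ already supported on at most $n$ points—do not spoil the argument. The last point is benign: fewer atoms give a polyhedral function with fewer pieces, still feasible for $\mathcal{P}_n(u)$, and the vanishing case trivializes both inequalities.
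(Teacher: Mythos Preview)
Your proposal is correct and follows essentially the same route as the paper's proof: for the lower bound you take an optimal $u_n$ in $\mathcal{P}_n(u)$, push it to the discrete measure $\MA_\rho(u_n^*)$ (feasible for $\mathcal{Q}_n^2$), and apply the left inequality of Theorem~\ref{theorem:delalande}; for the upper bound you take an optimal $\nu$ in $\mathcal{Q}_n^1(\mu)$, invoke Theorem~\ref{theorem:gu} to build a polyhedral $u_n$ with $\MA_\rho(u_n^*)=\nu$, fix the additive constant via the zero-mean constraint, and apply the right inequality. Your additional remarks on normalization and degenerate cases are welcome but do not alter the argument.
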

\begin{proof} 
Let $u: \R^d \rightarrow \R$ be a proper closed convex function, and let $u_n$ be a solution to problem~\eqref{problem:polyhedral}. We have by Remark~\ref{remark:principal} that $\nabla u$ and $\nabla u_n$ are the Brenier potentials associated with $\rho$ and the measures $\mu = \MA_\rho(u^*), \MA_\rho(u_n^*)$, respectively. By Theorem~\ref{theorem:delalande}, we establish: 
\begin{equation*}
C_L W_2(\mu, \MA_\rho(u_n^*))^3 \leq \| u - u_n \|_{L_2(\rho)} := \val(\mathcal{P}_n(u)). 
\end{equation*} 
Since $\MA_\rho(u_n^*)$ is none other than the sum of Dirac functions concentrated on the slopes $(q_k)_{k \in [n]}$ of the function $u_n$ and weighted by $(\rho(\partial u_n^*(q_k)))_{k \in [n]}$, we obtain a feasible point and consequently: $C_L (\val\mathcal{Q}_n^2(\mu))^3 \leq \val(\mathcal{P}_n(u))$. On the other hand, let $\mu = \MA_\rho(u^*)$, and let $\nu = \sum_{k = 1}^{n} \nu_k \, \delta_{q_k}$ be a solution to the problem $\mathcal{Q}_n^1(\mu)$. By Theorem~\ref{theorem:gu}, there exists a polyhderal function $u_n: x \mapsto \max_{k \in [n]} \langle q_k, x \rangle - p_k$ such that $MA_\rho(u_n^*) = \nu$ unique up to adding a constant $(c,..., c)$. The constant $c$ can be chosen so that $\int_X u_n(x) \, \rho(x) \, dx = \int_X u(x) \, \rho(x) \, dx$. This yields a feasible solution to problem~\eqref{problem:polyhedral} and using Theorem~\ref{theorem:delalande} once again, we obtain the other inequality from Theorem~\ref{theorem:principal}.
\end{proof}
\begin{figure}[h!]
\vspace{-0.35cm} 
  \centering
  \begin{tikzpicture}[
    node distance=1.6cm,
    >=Stealth,
    every node/.style={align=center}
  ]

  \node (12) {};
  \node[right=3.6cm of 12] (22) {};
  \node[below=of 22] (21) {};
  \node[below=of 12] (11) {};

  \node[left=0cm of 12] {$u$};
  \node[right=0cm of 22] {$\mu = \MA_{\rho}(u^*)$}; 
  \node[right=0cm of 21] {$\mu_n$};
  \node[left=0cm of 11] {$u_n$}; 

  \draw[->] (12) -- (22) node[midway,above] {Monge-Ampère measure};
  \draw[->] (22) -- (21) node[midway,right] {Measure\\ quantization};
  \draw[->] (21) -- (11) node[midway,below] {Monge-Ampère equation};
  \draw[->] (12) -- (11) node[midway,left] {Polyhedral\\ approximation};

  \end{tikzpicture}
  \caption{Relationship between polyhedral approximation and measure quantization (Theorem~\ref{theorem:principal}).}
  \label{fig:framework}
\vspace{-0.35cm} 
\end{figure}
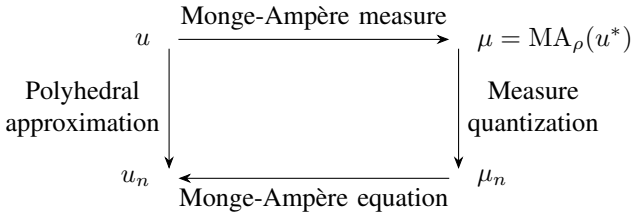
Using the notation $\epsilon_\mathcal{Q} = C_L^{-\frac{1}{3}}$ and $\epsilon_\mathcal{P} = C_R$, we derive Corollary~\ref{corollary:principal} from Theorem~\ref{theorem:principal} by applying exploiting the previous inequalities.
\begin{corollary} \label{corollary:principal}
  Let $p \in [1,2]$, $u: \R^d \rightarrow \R$ be a proper closed convex function, and $\mu = \MA_\rho(u^*)$. Then if $v$ is the optimal solution of problem~\eqref{problem:polyhedral}, then $\MA_\rho(v^*)$ is an $\epsilon_\mathcal{Q} \, {val(\mathcal{P}_n(\mu))}^{\frac{1}{3}}$-optimal solution of~\eqref{problem:quantization}. Conversely, if $\nu$ is the optimal solution of the latter problem, then the convex solutions $v$ to the $\rho$-Monge-Amp\`ere problem $\MA_\rho(v^*) = \nu$ are $\epsilon_\mathcal{P}\, {val(\mathcal{Q}_n^p(u))}^{\frac{1}{2}}$-optimal for~\eqref{problem:polyhedral}.
\end{corollary}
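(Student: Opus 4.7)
The plan is to deduce the Corollary as a direct consequence of Theorem~\ref{theorem:principal}, essentially re-reading its two inequalities as statements about feasible points and their values. The work is largely interpretive; the only nontrivial arithmetic steps are (i) extracting an $\epsilon$-optimality bound from an absolute bound, using the nonnegativity of the optimal values involved, and (ii) bridging the $W_1$ and $W_p$ distances for $p \in [1,2]$.

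For the forward direction, I would let $v$ achieve the minimum in $\mathcal{P}_n(u)$. Because $v$ is polyhedral with at most $n$ affine pieces whose slopes lie in $Q$, the Monge-Ampère measure $\MA_\rho(v^*)$ is supported on at most $n$ points of $Q$, so it is feasible for $\mathcal{Q}_n^2(\mu)$. Mimicking the left half of the proof of Theorem~\ref{theorem:principal} (apply Theorem~\ref{theorem:delalande} to the Brenier potentials $\nabla u, \nabla v$), I obtain
\begin{equation*}
C_L\, W_2(\mu, \MA_\rho(v^*))^3 \leq \|u-v\|_{L_2(\rho)} = \val(\mathcal{P}_n(u)).
\end{equation*}
Taking cube roots and using $\epsilon_{\mathcal{Q}} = C_L^{-1/3}$ gives $W_2(\mu, \MA_\rho(v^*)) \leq \epsilon_{\mathcal{Q}}\,\val(\mathcal{P}_n(u))^{1/3}$. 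Since $\val(\mathcal{Q}_n^2(\mu)) \geq 0$, this \emph{absolute} bound implies the desired $\epsilon$-optimality, namely $W_2(\mu, \MA_\rho(v^*)) - \val(\mathcal{Q}_n^2(\mu)) \leq \epsilon_{\mathcal{Q}}\, \val(\mathcal{P}_n(u))^{1/3}$.

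For the converse direction, I would let $\nu$ be optimal for $\mathcal{Q}_n^p(\mu)$. Theorem~\ref{theorem:gu} furnishes a polyhedral function $v$, unique up to an additive constant, with $\MA_\rho(v^*)=\nu$; I fix the constant so that $\int_X v\, \rho\, dx = \int_X u\, \rho\, dx$, rendering $v$ feasible for $\mathcal{P}_n(u)$. Applying the right half of Theorem~\ref{theorem:principal} (i.e., Theorem~\ref{theorem:delalande} again) yields $\|u-v\|_{L_2(\rho)} \leq C_R\, W_1(\mu,\nu)^{1/2}$. I then invoke the standard monotonicity $W_1 \leq W_p$ for $p \geq 1$ to obtain
\begin{equation*}
\|u-v\|_{L_2(\rho)} \leq C_R\, W_p(\mu,\nu)^{1/2} = \epsilon_{\mathcal{P}}\, \val(\mathcal{Q}_n^p(\mu))^{1/2}.
\end{equation*}
Combined with $\val(\mathcal{P}_n(u)) \geq 0$, this is exactly the claimed $\epsilon_{\mathcal{P}}\,\val(\mathcal{Q}_n^p(\mu))^{1/2}$-optimality of $v$ for $\mathcal{P}_n(u)$.

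There is no serious obstacle: the mechanism is already established in Theorem~\ref{theorem:principal}. The only small care points are (a) verifying that the optimal $v$ (resp. the $v$ coming from Theorem~\ref{theorem:gu}) is indeed feasible for the opposite problem---for $\mathcal{Q}_n^2(\mu)$ this is automatic since $\MA_\rho(v^*)$ is concentrated at $n$ slopes in $Q$, while for $\mathcal{P}_n(u)$ this requires only the usual constant-adjustment step---and (b) the use of the Hölder-type monotonicity $W_1 \leq W_p$, which is what permits the statement to cover the whole range $p \in [1,2]$ even though Theorem~\ref{theorem:principal} only provides a $W_1$-style upper bound.
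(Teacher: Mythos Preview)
Your proposal is correct and follows essentially the same route as the paper, which derives the corollary in one line by rereading the two inequalities from Theorem~\ref{theorem:principal} with $\epsilon_{\mathcal{Q}}=C_L^{-1/3}$ and $\epsilon_{\mathcal{P}}=C_R$. Your write-up simply makes explicit the two small points the paper leaves implicit---the passage from an absolute bound to $\epsilon$-optimality via nonnegativity of the optimal values, and the monotonicity $W_1\leq W_p\leq W_2$ for $p\in[1,2]$ (the latter also covers the forward direction for general $p$, which you only spell out for $p=2$).
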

Corollary~\ref{corollary:principal} asserts that if the error is well controlled for one of the two problems, then the other is also well controlled and yields a ``near"-optimal solution. 
\begin{remark}
The error estimates on polyhedral approximation in~\cite{gruber2007convex} and on optimal quantization in~\cite{Pags2004} cannot be deduced from one another via Theorem~\ref{theorem:delalande}.
\end{remark}

\if{
\begin{remark}
In the one-dimensional case with $\rho$ the Lebesgue measure on $X=[0,1]$, the solution to the Monge-Amp\`ere problem $\MA_\rho(v^*) = \sum_{k=1}^m \nu_k \, \delta_{q_k}$ can be simply computed by integration of Heaviside functions. In fact, up to an additive constant, the function:
\begin{equation*}
v(x) = \sum_{k=1}^m (q_k - q_{k-1}) \, \sum_{l = 0}^{k-1} \max(0, x - \nu_l),
\end{equation*}
is a solution, where $q_0 = 0$ and $(q_k)_{k \in [m]}$ are ordered.
\end{remark}
}\fi

\subsection{Polyhedral approximation algorithm: clustering the vertices of the lifted Newton polytope} \label{subsection:geometric}
To obtain a polyhedral approximation of a convex function, \Cref{theorem:principal} suggests quantizing the Monge-Amp\`ere measure of its Legendre-Fenchel dual, and then recovering the polyhedral approximation by solving a semi-discrete Monge-Amp\`ere problem~\cite{sdot, meyron2019initialization}, as per~\Cref{theorem:gu}. This approach is computationally efficient in low dimensions. In the following applications, however, we are interested in high-dimensional examples, but the problem is simplified since our input is already given by a polyhedral function
\begin{align}
  u_N: x \in \R^d \mapsto \max_{k \in [N]} \langle q_k, x \rangle - p_k. \label{e-input}
\end{align}
Our goal is to construct a reduced complexity approximation $u_S: x\mapsto \max_{k \in S} ( \langle q_k, x \rangle - p_k)$ where $S\subset[N]$ must be of a fixed cardinality $n$. This is the {\em pruning problem} studied in~\cite{Sridharan_2010,qu-gaubert-2011}. The previously established connection suggests a ``dual'' heuristic that
operates on the Legendre-Fenchel transform of $u_N$ instead of the original function.
\begin{proposition}[{\cite{hormander2007notions}, Theorem 2.2.7}] 
The convex conjugate of the function $u_N$ is given by:
\begin{equation*}
u^*_N(q) = \inf_{\lambda \in \Delta_N} \sum_{k = 1}^N \lambda_k p_k \text{ s.t. } \sum_{k = 1}^N \lambda_k \, q_k = q.
\end{equation*}
\end{proposition}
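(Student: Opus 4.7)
The plan is to derive the formula by a direct application of linear-programming duality to the definition of the Legendre-Fenchel conjugate. I would start from
\begin{equation*}
u_N^*(q) = \sup_{x \in \R^d}\Bigl(\langle q, x\rangle - \max_{k \in [N]}\bigl(\langle q_k, x\rangle - p_k\bigr)\Bigr),
\end{equation*}
and linearize the inner maximum by introducing a scalar auxiliary variable $t \in \R$. The supremum rewrites as the linear program
\begin{equation*}
\sup_{(x,t)\in\R^d\times\R} t \quad \text{s.t.}\quad t + \langle q_k - q,\, x\rangle \le p_k,\ \forall k \in [N],
\end{equation*}
since for each fixed $x$ the best $t$ saturates the tightest constraint and yields exactly $\langle q, x\rangle - u_N(x)$.

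Next, I would form the standard LP dual by assigning multipliers $\lambda_k \ge 0$ to the $N$ inequalities. Inspecting when the Lagrangian is bounded above in $(x,t)$ forces two conditions: $\sum_k \lambda_k = 1$ from the coefficient of $t$, i.e.\ $\lambda \in \Delta_N$, and $\sum_k \lambda_k (q_k - q) = 0$ from the coefficient of $x$, i.e.\ $\sum_k \lambda_k q_k = q$. Under these constraints the dual objective collapses to $\sum_k \lambda_k p_k$, producing exactly the claimed expression.

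To close the gap between the primal and dual values, I would invoke strong LP duality in finite dimension. When $q \in \conv\{q_1, \ldots, q_N\}$, the dual is feasible with a finite optimum, and strong duality yields agreement with the primal. When $q$ lies outside this convex hull, the dual feasible set is empty, so its infimum equals $+\infty$ by convention; simultaneously, a separating-hyperplane argument furnishes a direction $d \in \R^d$ with $\langle q - q_k,\, d\rangle > 0$ for every $k$, along which $t$ can be pushed to $+\infty$ while preserving every primal inequality, giving $u_N^*(q) = +\infty$ as well. The only (mild) subtlety is precisely this unbounded-domain case, which I handle directly by separation rather than through a minimax theorem such as Sion's, whose compactness hypothesis fails on $\R^d$. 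With this two-case argument, no additional regularity on the data $(q_k, p_k)$ is required.
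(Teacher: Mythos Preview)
Your argument is correct. The linearization via the auxiliary variable $t$ is the right move, the dual constraints $\lambda\in\Delta_N$ and $\sum_k\lambda_k q_k=q$ are derived correctly, and your two-case treatment of strong duality (feasible dual versus $q\notin\conv\{q_1,\dots,q_N\}$ handled by a separating direction) cleanly closes the gap without appealing to any compactness-based minimax theorem.

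As for comparison: the paper does not supply a proof of this proposition at all---it simply quotes the result as Theorem~2.2.7 of H\"ormander, \emph{Notions of Convexity}. Your LP-duality derivation is therefore not a different route from the paper's proof but rather a self-contained argument where the paper offers none. It is the standard way to obtain this formula and requires nothing beyond finite-dimensional linear programming, so it fits well with the elementary spirit of the surrounding section.
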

Concretely, Legendre-Fenchel conjugacy associates with $u_N$ the function $u_N^*$, whose graph is the lower boundary of the \emph{lifted Newton polytope $\enewton$} in dimension $d+1$, i.e., $\enewton$ is the convex hull of the $N$ vertical rays $(q_k, p_k+\R_{\geq 0})_{k=1}^N$. The non-differentiability locus of the function $u_N^*$ is known as a regular subdivision of the configurations of points $(q_k)_{k=1}^N$, see~\cite[2.2.3]{DeLoera2010} for background.
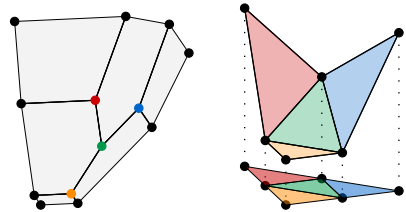
\begin{figure}[h!]
\vspace{-0.35cm} 
\centering
\subfloat{
\tdplotsetmaincoords{75}{-10}
\begin{tikzpicture}[tdplot_main_coords,scale=0.5]

\draw [line width=0.5pt,color=black] (-1,0,-1) -- (0,0,-1);
\draw [line width=0.5pt,color=black] (0,-1,-1) -- (0,0,-1);
\draw [line width=0.5pt,color=black] (1,1,0) -- (0,0,-1);
\draw [line width=0.5pt,color=black] (2,1,1) -- (1,1,0);
\draw [line width=0.5pt,color=black] (2,1,1) -- (3,2,3);
\draw [line width=0.5pt,color=black] (2,1,1) -- (2,-1,1);
\draw [line width=0.5pt,color=black] (1,2,1) -- (1,1,0);
\draw [line width=0.5pt,color=black] (1,2,1) -- (2,3,3);
\draw [line width=0.5pt,color=black] (1,2,1) -- (-1,2,1);

\draw[line width=0.2pt,fill=myblack,fill opacity=0.05] (-1,0,-1) -- (0,0,-1) -- (0,-1,-1) -- (-1,-1,-1) -- cycle;
\draw[line width=0.2pt,fill=myblack,fill opacity=0.05] (1,2,1) -- (1,1,0) -- (0,0,-1) -- (-1,0,-1) -- (-1,2,1) -- cycle;
\draw[line width=0.2pt,fill=myblack,fill opacity=0.05] (2,1,1) -- (1,1,0) -- (0,0,-1) -- (0,-1,-1) -- (2,-1,1) -- cycle;
\draw[line width=0.2pt,fill=myblack,fill opacity=0.05] (1,1,0) -- (2,1,1) -- (3,2,3) -- (2,3,3) -- (1,2,1) -- cycle;
\draw[line width=0.2pt,fill=myblack,fill opacity=0.05] (2,1,1) -- (2,-1,1) -- (3, -1, 3) -- (3,2,3) -- cycle;
\draw[line width=0.2pt,fill=myblack,fill opacity=0.05] (1,2,1) -- (-1,2,1) -- (-1, 3, 3) -- (2,3,3) -- cycle;

\node at (3, -1, 3) [circ, color=black, scale=1.] {};
\node at (-1, 3, 3) [circ, color=black, scale=1.] {};
\node at (-1,2,1) [circ, color=black, scale=1.] {};
\node at (2,-1,1) [circ, color=black, scale=1.] {};
\node at (-1,0,-1) [circ, color=black, scale=1.] {};
\node at (0,-1,-1) [circ, color=black, scale=1.] {};
\node at (-1,-1,-1) [circ, color=black, scale=1.] {};
\node at (2,3,3) [circ, color=black, scale=1.] {};
\node at (3,2,3) [circ, color=black, scale=1.] {};
\node at (0,0,-1) [circ, color=myorange, scale=1.] {};
\node at (1,1,0) [circ, color=mygreen, scale=1.] {};
\node at (1,2,1) [circ, color=myred, scale=1.] {};
\node at (2,1,1) [circ, color=myblue, scale=1.] {};
\end{tikzpicture}
}
\subfloat{
\tdplotsetmaincoords{70}{-20}
\begin{tikzpicture}[tdplot_main_coords,scale=0.8]

\draw [line width=0.5pt,color=black] (0,0,1) -- (0,1,1);
\draw [line width=0.5pt,color=black] (0,0,1) -- (1,0,1);
\draw [line width=0.5pt,color=black] (1,0,1) -- (0,1,1);
\draw [line width=0.5pt,color=black] (1,0,1) -- (1,1,2);
\draw [line width=0.5pt,color=black] (0,1,1) -- (1,1,2);
\draw [line width=0.5pt,color=black] (1,0,1) -- (2,0,3);
\draw [line width=0.5pt,color=black] (0,1,1) -- (0,2,3);
\draw [line width=0.5pt,color=black] (2,0,3) -- (1,1,2);
\draw [line width=0.5pt,color=black] (0,2,3) -- (1,1,2);

\draw[line width=0.2pt,fill=myorange,fill opacity=0.3] (0,0,1) -- (0,1,1) -- (1,0,1) -- cycle;
\draw[line width=0.2pt,fill=myblue,fill opacity=0.3] (1,0,1) -- (1,1,2) -- (2,0,3) -- cycle;
\draw[line width=0.2pt,fill=myred,fill opacity=0.3] (1,1,2) -- (0,1,1) -- (0,2,3) -- cycle;
\draw[line width=0.2pt,fill=mygreen,fill opacity=0.3] (1,1,2) -- (0,1,1) -- (1,0,1) -- cycle;

\node at (0,0,1) [circ, color=black, scale=1.] {};
\node at (1,0,1) [circ, color=black, scale=1.] {};
\node at (0,1,1) [circ, color=black, scale=1.] {};
\node at (1,1,2) [circ, color=black, scale=1.] {};
\node at (2,0,3) [circ, color=black, scale=1.] {};
\node at (0,2,3) [circ, color=black, scale=1.] {};

\draw [line width=0.5pt,color=black,loosely dotted] (0,0,1) -- (0,0,0.2);
\draw [line width=0.5pt,color=black,loosely dotted] (1,0,1) -- (1,0,0.2);
\draw [line width=0.5pt,color=black,loosely dotted] (0,1,1) -- (0,1,0.2);
\draw [line width=0.5pt,color=black,loosely dotted] (1,1,2) -- (1,1,0.2);
\draw [line width=0.5pt,color=black,loosely dotted] (2,0,3) -- (2,0,0.2);
\draw [line width=0.5pt,color=black,loosely dotted] (0,2,3) -- (0,2,0.2);

\node at (0,0,0.2) [circ, color=black, scale=1.] {};
\node at (1,0,0.2) [circ, color=black, scale=1.] {};
\node at (0,1,0.2) [circ, color=black, scale=1.] {};
\node at (1,1,0.2) [circ, color=black, scale=1.] {};
\node at (2,0,0.2) [circ, color=black, scale=1.] {};
\node at (0,2,0.2) [circ, color=black, scale=1.] {};

\draw[line width=0.2pt,fill=myorange,fill opacity=0.5] (0,0,0.2) -- (0,1,0.2) -- (1,0,0.2) -- cycle;
\draw[line width=0.2pt,fill=myblue,fill opacity=0.5] (1,0,0.2) -- (1,1,0.2) -- (2,0,0.2) -- cycle;
\draw[line width=0.2pt,fill=myred,fill opacity=0.5] (1,1,0.2) -- (0,1,0.2) -- (0,2,0.2) -- cycle;
\draw[line width=0.2pt,fill=mygreen,fill opacity=0.5] (1,1,0.2) -- (0,1,0.2) -- (1,0,0.2) -- cycle;

\end{tikzpicture}
}
\caption{The graph of the function $u: (x, y) \mapsto \max(-1, x - 1, y - 1, x + y - 2, 2 \, x - 3, 2 \, y - 3)$ (left); its Newton polytope (bottom right); its (three-dimensional) lifted Newton polytope (upper right).}
\label{fig:newton}
\vspace{-0.35cm} 
\end{figure}

Figure~\ref{fig:newton} shows a function $u$ on the left, alongside with its Legendre-Fenchel dual on the right, identified to the lifted Newton polytope $\enewton$. Each black dot in $\enewton$ corresponds to a affine term in~\eqref{e-input}, which defines a linearity region of the primal function (Laguerre cell, left). Pruning a polyhedral function involves selecting a subset of affine functions, to create a polyhedral approximation that closely fits the original function. By duality, this process corresponds to identifying the points  $(q_k, p_k)_{k=1}^N$ that are the ``most essential'' in $\enewton$. To do so, we will use a ``multiple-center'' model. Specifically, we fix a norm on $\R^{d+1}$, and look for a subset of ``centers'' $\{(q_k,p_k) \}_{k\in S}$ with $S \subseteq [N]$, $|S|=n$, solution of the following problem:
\begin{equation} \label{equation:newton} 
\mathcal{E}(n, N) = \min_{S \subseteq [N], |S| = n} \max_{k \in [N]} \min_{l \in S} \|(q_k, p_k) - (q_l, p_l)\|.
\end{equation}
Equivalently, we want to minimize the maximum distance of the points $\{(q_k,p_k)\}_{k\in [N]}$ to a set of ``centers'' of cardinality $n$, that must be chosen. We will use the standard term of ``{\em $k$-center}''~\cite{Gonzalez1985ClusteringTM}, for this problem,although in our setting the number of centers is denoted by $n$ instead of $k$.

The next proposition shows that the approximation error for $u_N$ is controlled by the error of the $k$-center problem.
\begin{proposition} \label{proposition:newton}
Let $S$ be an optimal solution of \eqref{equation:newton}. Then, for all $x \in X$, we have that:
\begin{align*} 
0 \leq u_N(x) - u_S(x) \leq \mathcal{E}(n, N) \| (x, 1) \|_*,
\end{align*}
where $\| \cdot \|_*$ denotes the norm dual to $\| \cdot \|$.
\end{proposition}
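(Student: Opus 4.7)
The plan is to break the double inequality into two parts. The lower bound $0 \leq u_N(x) - u_S(x)$ is immediate, since $S \subseteq [N]$ implies that the maximum defining $u_S(x)$ is taken over a smaller index set than that defining $u_N(x)$. So essentially all of the work lies in establishing the upper bound.

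For the upper bound, at a fixed $x \in X$ I would start by selecting an active affine term: pick $k^\star \in \argmax_{k\in [N]} (\langle q_k, x\rangle - p_k)$, so that $u_N(x) = \langle q_{k^\star}, x\rangle - p_{k^\star}$. Then, by the definition of $\mathcal{E}(n,N)$ in~\eqref{equation:newton}, there exists some $l^\star \in S$ with $\|(q_{k^\star},p_{k^\star}) - (q_{l^\star},p_{l^\star})\| \leq \mathcal{E}(n,N)$. Since $l^\star \in S$, I have the one-sided bound $u_S(x) \geq \langle q_{l^\star}, x\rangle - p_{l^\star}$, and chaining these gives
\begin{equation*}
u_N(x) - u_S(x) \;\leq\; \langle q_{k^\star} - q_{l^\star}, x\rangle - (p_{k^\star} - p_{l^\star}).
\end{equation*}

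The next step is to recognize the right-hand side as a dual pairing in $\R^{d+1}$. Writing
\begin{equation*}
\langle q_{k^\star} - q_{l^\star}, x\rangle - (p_{k^\star} - p_{l^\star}) = \bigl\langle (q_{k^\star}-q_{l^\star},\,p_{k^\star}-p_{l^\star}),\,(x,-1) \bigr\rangle,
\end{equation*}
I can apply the standard duality inequality $\langle v,w\rangle \leq \|v\|\cdot \|w\|_*$ with $v = (q_{k^\star},p_{k^\star}) - (q_{l^\star},p_{l^\star})$ and $w = (x,-1)$. Combining this with the $k$-center bound above yields $u_N(x) - u_S(x) \leq \mathcal{E}(n,N)\,\|(x,-1)\|_*$, which is the stated result up to the sign on the last coordinate.

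The only step that requires a moment of care, and the principal (minor) obstacle, is reconciling the sign: the statement writes $\|(x,1)\|_*$, whereas the natural computation produces $\|(x,-1)\|_*$. These two quantities coincide by the symmetry $\|v\|_* = \|-v\|_*$ valid for any norm (applied here to the last component only by composing with the reflection $(x,t) \mapsto (-x,-t)$ followed by $(x,t)\mapsto (-x,t)$, which one does not need if the norm is absolute, as is the case for all $\ell_p$ norms typically used here). Once this identification is done the upper bound follows, and combined with the trivial lower bound the proposition is proved.
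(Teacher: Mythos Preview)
Your proof is correct and is essentially identical to the paper's: the paper writes the difference compactly as $\max_{k\in[N]}\min_{l\in S}\langle (q_k,p_k)-(q_l,p_l),(x,1)\rangle$ and then applies the dual-norm inequality termwise, which is precisely your argument with the indices $k^\star,l^\star$ instantiated explicitly. Your observation on the sign is also well taken---the natural pairing indeed produces $(x,-1)$, and the paper's $(x,1)$ implicitly relies on the chosen norm being absolute in the last coordinate.
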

\begin{proof}
We have:
\begin{align*}
u_N(x) - u_S(x) & = \max_{k \in [N]} (\langle q_k , x \rangle - p_k) - \max_{l \in S} (\langle q_l , x \rangle - p_l) \\
&= \max_{k \in [N]} \min_{l \in S} \langle(q_k, p_k) - (q_l, p_l), (x, 1) \rangle \\
& \leq \max_{k \in [N]} \min_{l \in S} \|(q_k, p_k) - (q_l, p_l)\| \| (x, 1) \|_* \\
&=  \mathcal{E}(n, N) \| (x, 1) \|_*.
\end{align*}
The non-negativity follows from $S \subseteq [N]$.
\end{proof}
A $k$-center greedy algorithm which provides a $2$-factor approximation with a time complexity of $O(N \, n \, (d+1))$ was introduced in~\cite{Gonzalez1985ClusteringTM}. This algorithm enables us to approximate the lifted Newton polytope by selecting a set of centers $\{(q_k,p_k)\}_{k\in S}$ among its vertices, corresponding to a lower approximation $u_S \leq u_N$. Before applying the $k$-center algorithm, we first ensure that each affine function $x\mapsto \langle q_k, x \rangle - p_k$ actually contributes to the original function $u_N$ on the domain $D$. This guarantees that the vertices of the lifted Newton polytope $\enewton$ correspond precisely to the slopes and intercepts of $u_N$. For each $k \in S$, we solve the following problem:
\begin{align} \tag{$F_{k, S}^D$} \label{equation:feasibility}
\max_{v, x \in D} \; v \\
\text{s.t.} \quad & v \leq \langle q_k - q_l, x \rangle - (p_k - p_l), \quad \forall l \in S \setminus \{k\}. \notag
\end{align}
If the optimal value of~\eqref{equation:feasibility} is non-positive, then $(q_k, p_k)$ is not a vertex of $\enewton$, and removing $(q_k, p_k)$ from $S$ leaves $u_S$ unchanged on $D$. Thus, such points can be safely excluded without altering the approximation on $D$.
\begin{algorithm}[!ht]
\caption{$k$-center pruning on $D$}
\label{algorithm:dqh}
\begin{algorithmic}[1]
\State \textbf{input:} $u_N$, a polyhedral function as in~\eqref{e-input};
$n$, the budget;
\State \textbf{output:} A subset $S \subseteq [N]$ with $|S|=n$ and a polyhedral approximation $u_S$;
\State \textbf{initialization:} $A = [N]$, and $S = \emptyset$;
\For{$k = 1$ to $N$}\Comment{preprocessing}
\If{$\val(F_{k, A}^D) < 0$}
\State $A = A \setminus \{k\}$;
\EndIf
\EndFor
\State choose an arbitrary index $r$ from $A$;\Comment{greedy selection}
\For{$t = 2$ to $n$} 
\State $\text{Find } r = \argmax_{k \in A} \min_{l \in S} \|(q_k, p_k) - (q_l, p_l)\|$;
\State $S = S \cup \{r\}$;
\EndFor
\State \textbf{return} $S, u_S$;
\end{algorithmic} 
\end{algorithm}

\section{Applications} \label{section:applications}
To assess our method, we apply it to a challenging example from quantum
control, following an original max-plus based approach introduced by Sridharan et al.~\cite{Sridharan_2010,Sridharan2014}. We also apply it to a pricing problem under incentive compatibility constraints, similarly to what was done by~\cite{jacquet-ackooij-2023}, using the Rochet-Chon\'e model.

\subsection{Max-plus approximation method for quantum gate synthesis} 
We first recall the max-plus approach of~\cite{Sridharan_2010,Sridharan2014} to quantum gate synthesis, and then present our numerical results.
Quantum gate synthesis is a process that involves creating efficient quantum circuits to perform specific quantum computations. An algorithm operating on a $n$-qubit system can be represented using elements from the special unitary group $SU(2^n)$. Consequently, the problem's dimension grows exponentially as $(4^n - 1)$. The system dynamics are given by:
\begin{align*}
\frac{dU}{dt} = -i \, \{\sum_{k=1}^M v_k(t) \, H_k\} \, U,\qquad
U \in SU(2^n), 
\end{align*}
with control $v \in \mathcal{V} := \{ v : \R \mapsto \R^M \mid v \text{ is} \allowbreak \text{ piecewise} \allowbreak \text{continuous}, \|v(\cdot)\| = 1\}$, initial condition $U(0)=U_0$, and such that the set of available, one and two, qubit Hamiltonians $-i H_1,\,-i H_2\,\ldots\,, -i H_ M $ are generators of the Lie algebra of $\mathrm{SU}(2^n)$. The control problem involves reaching the identity element starting from $U_0$, minimizing the cost function:
\begin{equation*}
C_t(U_0) = \inf\limits_{v \in {\mathcal{V}}} \Big \{ { \int\limits_t^{T} {\sqrt{{v(s)}^T \, R \, v(s)}} ds} + \frac{1}{\epsilon} \, \phi(U_t(v, U_0)) \Big \}.
\end{equation*}
Here, $U_t(v, U_0)$ represents the state reached from condition $U_0$ under the control $v$ applied in the time interval $[t,T]$, and the terminal cost is a penalization on the final state $\frac{1}{\epsilon} \, \phi(U) := \frac{1}{\epsilon} \langle (U - I), (U - I) \rangle$. As $\epsilon \to 0$, the penalty tends to infinity unless the state $U_t(v, U_0)$ exactly matches the identity. The scalar product is defined as $\langle U, V \rangle = \text{Re}\left( \text{tr}(U^\dagger V) \right)$, where $U^\dagger$ is the transposed complex conjugate of $U$. 

As proposed in\cite{Sridharan_2010}, we tackle the control problem by discretizing the time interval into $N$ components, each with a step size of $\tau = T / N$, and apply backpropagation as well as pruning. In this framework, we consider constant unitary controls over the intervals $[k \, \tau, (k+1) \, \tau)$. On each interval, the control is chosen from the set $\mathcal{U} = \{ v \mid \|v\| = 1, v \in \{0, \pm 1\}^M \} \cup \{ 0 \}$, which corresponds to the zero vector, the standard basis vectors of $\mathbb{R}^M$, and their negatives.

The value function $C_{k \, \tau}(U_0)$ can be expressed as:
\begin{equation*}
C_{k \, \tau}(U_0) = \inf\limits_{v \in \mathcal{U}} \sqrt{{v}^T \, R \, v} \, \tau + C_{(k+1) \, \tau}(\Phi(v) \, U_0),
\end{equation*}
where $\Phi(v) = \exp(-i \sum_{k=1}^M v_k \, H_k \, \tau)$ represents the propagator of the dynamics on the set $\mathcal{U}$ on an interval of size $\tau$. In Section III.B of~\cite{Sridharan_2010}, it is demonstrated that, with a simple reindexing, this expression can be rewritten as:
\begin{equation}
C_{k \, \tau}(U_0) = \inf\limits_{\ell \in \Lambda_k} c_\ell + P_\ell \cdot U_0,\label{e-rep}
\end{equation}
where the initial parameters are given by $c_0 = \frac{1}{\epsilon} \cdot 2 \, \text{tr}(I) = \frac{1}{\epsilon} \cdot 2^{n+1}$ and $P_0 = -\frac{2}{\epsilon} \, I$. Moreover, at the propogation step from instant $(k-1) \, \tau$ to $k \, \tau$, we rely on the Cartesian product with $\mathcal{U}$. More precisely, we have
$\Lambda_k = \mathcal{U} \times \Lambda_{k-1}$, and for any $\ell = (l, \ell^\prime) \in \Lambda_k$:
\begin{equation*}
c_\ell = c_{\ell^\prime} + \sqrt{(v^l)^T R v^l} \, \tau, \quad P_\ell = \Phi(v^l)^\dagger P_{\ell^\prime}.
\end{equation*}
The cardinality of $\Lambda_k$ is $|\Lambda_0||\mathcal{U}|^{k}$, hence it grows exponentially as the number of iterations $k$ increases. Consequently, the size of the representation~\eqref{e-rep} would also grow exponentially if no further simplifications were made. In the above cited papers, the authors tackle this difficulty by means of a {\em pruning} operator $\mathcal{P}$. i.e., at each step $k$, they replace $\Lambda_k$ by a subset $\mathcal{P}(\Lambda_k)$ of smaller cardinality, obtained by keeping only the $m$ most useful affine terms in the representation~\eqref{e-rep}. Here, $m$ is a prescribed ``budget'' (limit on the complexity of representations). In this way, the induction $\Lambda_k = \mathcal{U} \times \Lambda_{k-1}$ is replaced by $\Lambda_k =\mathcal{P}(\mathcal{U} \times \Lambda_{k-1})$, so that $|\Lambda_k|\leq m$ at each step, preventing the exponential blow up. The authors of~\cite{Sridharan_2010} formalized the notion of ``useful affine terms'' by a notion of {\em importance metric}, keeping only the $m$ affine forms with the largest importance metric. They also considered variants of this method, in which the budget is allowed to depend on the iteration. It has been noted that the pruning algorithm is a critical element in the implementation of this algorithm, allowing one to attenuate the curse of dimensionality. In particular, the benchmarks of~\cite{qu-gaubert-2011} indicate that less than $2\%$ of the execution time is used in the propagation stage, the remaining $98\%$ being used to compute the importance metric and to perform the pruning. Here, we shall apply the new pruning algorithm (Algorithm~\ref{algorithm:dqh}).
\if{
\begin{figure}[h!]
\centering
\begin{tikzpicture}[scale = 0.6, every node/.style={draw, rectangle, minimum size=0.3cm}]
\node (a0) at (0,0) {};
\node[draw=none] (s0) at (0, 3) {Step 0};

\node (b1) at (2,1.25) {};
\node (b2) at (2,0.75) {};
\node (b3) at (2,0.25) {};
\node (b4) at (2,-0.25) {};
\node (b5) at (2,-0.75) {};
\node (b6) at (2,-1.25) {};
\node[draw=none] (s1) at (2,3) {Step 1};

\foreach \x in {b1,b2,b3,b4,b5,b6} {
\draw (a0) -- (\x);
}

\node (b1p) at (4,1.25) {};
\node (b4p) at (4,-0.25) {};
\node (b5p) at (4,-0.75) {};
\node[draw=none] (s1p) at (4,3) {Step 1'};

\draw[thick, rounded corners] (1.5, -1.6) rectangle (2.5, 1.6);
\draw[->, thick] (2.8, 0) -- (3.2, 0) node[draw=none, midway, above] {$\mathcal{P}$};
\draw[thick, rounded corners] (3.5, -1.6) rectangle (4.5, 1.6);

\node (c11) at (6,2.25) {};
\node[draw=none] (c12) at (6,1.75) {$\vdots$};
\node (c16) at (6,1.25) {};
\node (c41) at (6,0.75) {};
\node[draw=none] (c42) at (6,0.25) {$\vdots$};
\node (c46) at (6,-0.25) {};
\node (c51) at (6,-0.75) {};
\node[draw=none] (c52) at (6,-1.25) {$\vdots$};
\node (c56) at (6,-1.75) {};
\node[draw=none] (s2) at (6,3) {Step 2};

\node (c11p) at (8,2.25) {};
\node (c41p) at (8,0.25) {};
\node (c51p) at (8,-1.25) {};
\node[draw=none] (s2p) at (8,3) {Step 2'};

\draw (b1p) -- (c11);
\draw (b1p) -- (c16);
\draw (b4p) -- (c41);
\draw (b4p) -- (c46);
\draw (b5p) -- (c51);
\draw (b5p) -- (c56);

\draw[thick, rounded corners] (5.5, -2.1) rectangle (6.5, 2.6);
\draw[->, thick] (6.8, 0) -- (7.2, 0) node[draw=none, midway, above] {$\mathcal{P}$};
\draw[thick, rounded corners] (7.5, -2.1) rectangle (8.5, 2.6);
\end{tikzpicture}
\caption{Quantum gate synthesis using the pruning operator $\mathcal{P}$ and two propagation steps.}

\label{fig:pruning}
\end{figure}
}\fi

Let us now consider the numerical example from~\cite{Sridharan2014}, with $M = 5$. The control set is derived from Hamiltonians of the form $H_k \in \{I \otimes \sigma_x, \, I \otimes \sigma_z, \, \sigma_x \otimes I, \, \sigma_z \otimes I, \, \sigma_x \otimes \sigma_x\}$, which include four single-body terms and one two-body term. These control directions are sufficient to generate the full Lie algebra $\mathfrak{su}(4)$, ensuring controllability. Here, $\sigma_x$ and $\sigma_z$ are Pauli matrices. The matrix $R$ is diagonal, with the first $M-1$ entries weighted by $1/r$, while the last entry has a weight of 1. 

Using the parameters, $\epsilon=0.05$, $\tau = 0.1$, $N = 6$, and $r = 3$, we compute the function $C_0$ using Algorithm~\ref{algorithm:dqh}, and plot it in the plane $\sigma_x \otimes \sigma_x$ vs. $\sigma_y \otimes \sigma_y$:
\begin{figure}[h!] 
\centering
\includegraphics[width=0.25\textwidth]{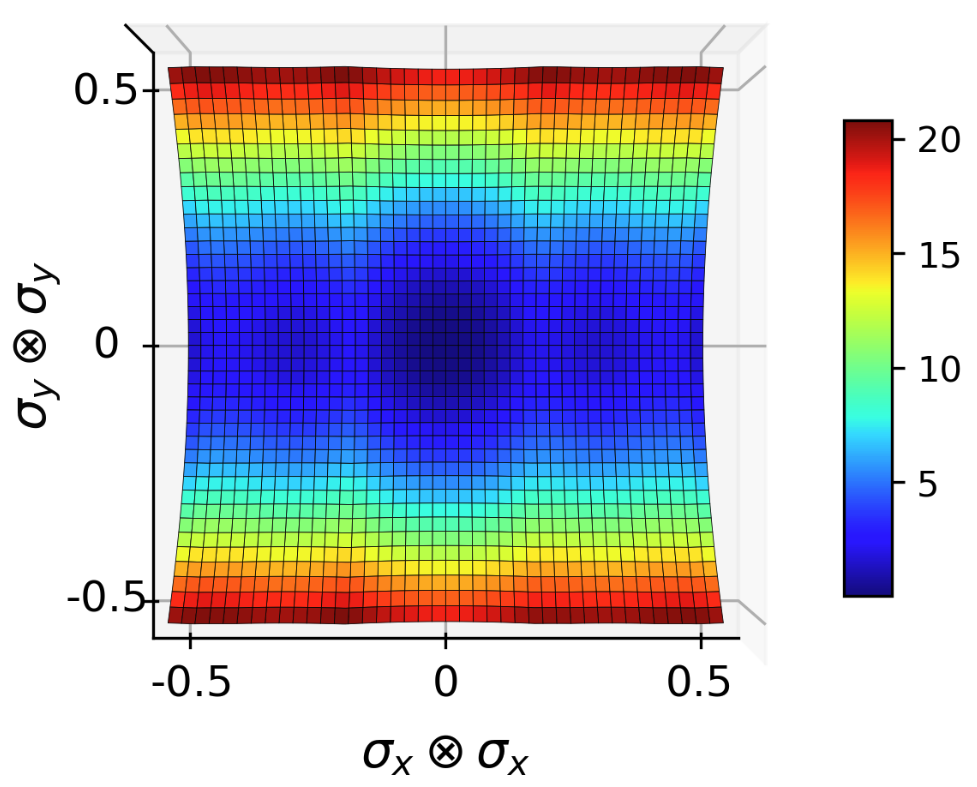}%
\caption{Plot of the value function in the plane $\sigma_x \otimes \sigma_x$ vs. $\sigma_y \otimes \sigma_y$ with the cost ratio $r$ set to 3 and a budget set to 100.} 
\label{fig:quantum:2014}
\vspace{-0.7cm} 
\end{figure}
Similarly to~\cite{Sridharan2014}, Figure~\ref{fig:quantum:2014} highlights that in the direction of the available two-body Hamiltonian $\sigma_x \otimes \sigma_x$, the cost remains relatively low compared to the direction of the two-body Hamiltonian $\sigma_y \otimes \sigma_y$, which is only accessible through Lie brackets. Additionally, small oscillations can be observed along the $y = 0$ axis. These are artifacts of the discrete propagation method caused by the finite step size of 0.1. 

Next, we set $\tau = 0.2$, $N = 50$, and $r = 1.3$ and evaluate the function using budgets ranging from 20 to 100. We then assess solution quality by computing the cost $L_1$ norm over the set: $\{\exp\left(i \, (x \, \sigma_x \otimes \sigma_x + y \, \sigma_y \otimes \sigma_y)\right) | x \in [-\pi, \pi], y \in [-\pi, \pi]\}$, under four quantization methods:
\begin{itemize}
  \item {\em $k$-center}: The greedy $k$-center algorithm~\cite{Gonzalez1985ClusteringTM} applied to $\{(-P_k, c_k)\}_{k=1}^N$.
  \item {\em $k$-center-LP}: Algorithm~\ref{algorithm:dqh} on $D = \{ X \mid Re(X_{i,j}), \allowbreak Im(X_{i,j}) \allowbreak \in [-1, 1]\}$, using LP-based preprocessing.
  \item {\em $k$-center-SDP}: Algorithm~\ref{algorithm:dqh} on the symmetric sphere $B := \{ X \text{ symmetric} \mid X X^\dagger \preceq I \}$, with SDP-based preprocessing. 
  \item {\em Primal greedy descent with SDP-based importance metric (PGD-SDP)}: The algorithm from~\cite{Gaubert_2014}, where the importance metric is computed on $B$ and the $N-n$ least important components are iteratively removed.
\end{itemize}
The experiments were conducted using an 13th Gen Intel® Core™ i5-13600H CPU @ 4.8GHz (max) with 32GB RAM, and were parallelized across 16 CPU threads using Python's {\em multiprocessing} module to accelerate computation. The LP and the SDP were solved by calling the CBC solver from the PuLP library and the MOSEK solver from the CVXPY library, respectively.
\begin{figure}[h!] 
\vspace{-0.35cm} 
\centering
\includegraphics[width=0.25\textwidth]{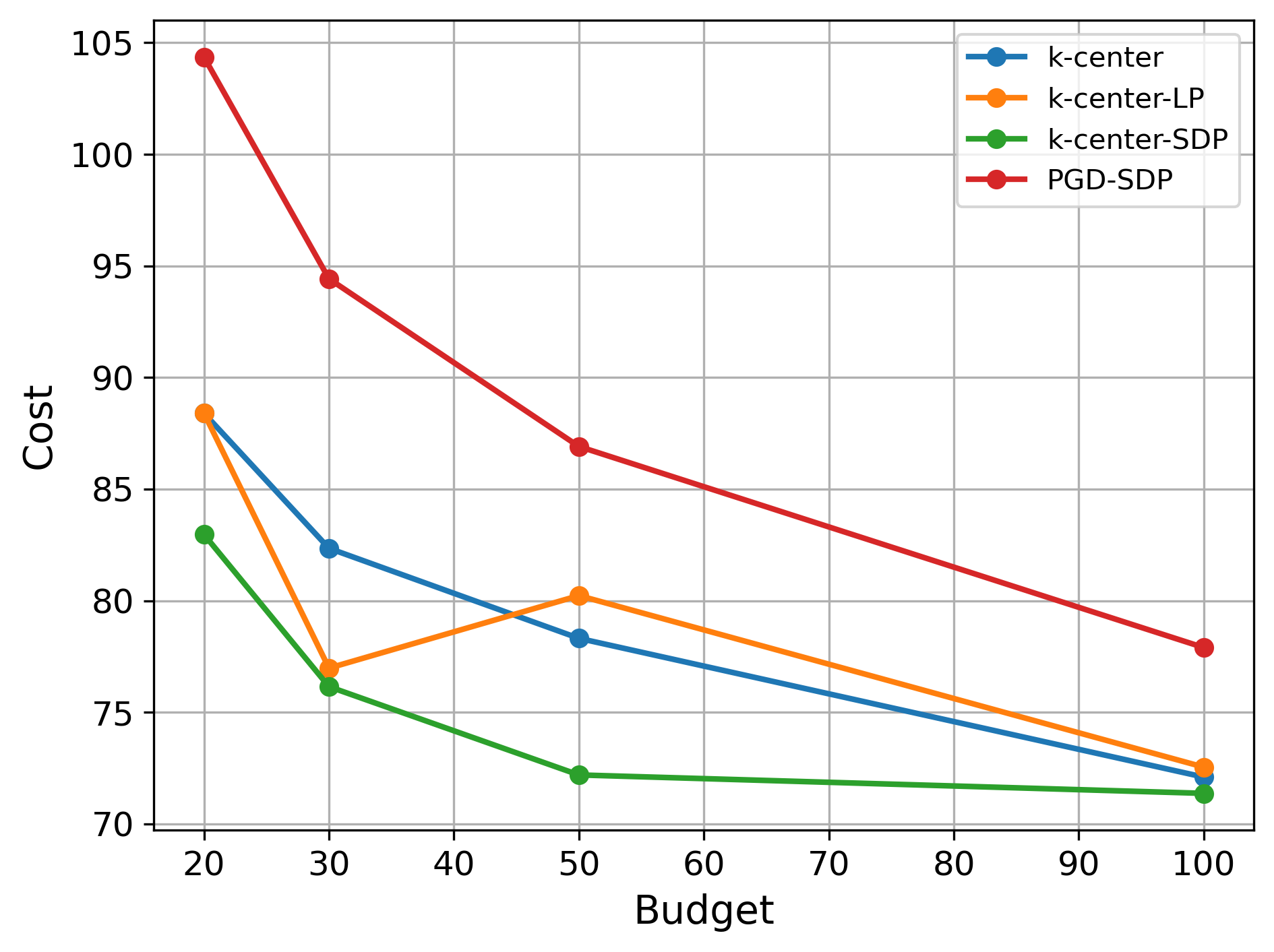}%
\vspace{-0.1cm}
\caption{Average of the value function on the plane $\sigma_x \otimes \sigma_x$ vs. $\sigma_y \otimes \sigma_y$, for different budgets, with $r$ set to 1.3.}
\label{fig:quantum:comparative}
\vspace{-0.4cm} 
\end{figure}

\begin{figure}[h!] 
\vspace{-0.35cm} 
\centering
\includegraphics[width=0.25\textwidth]{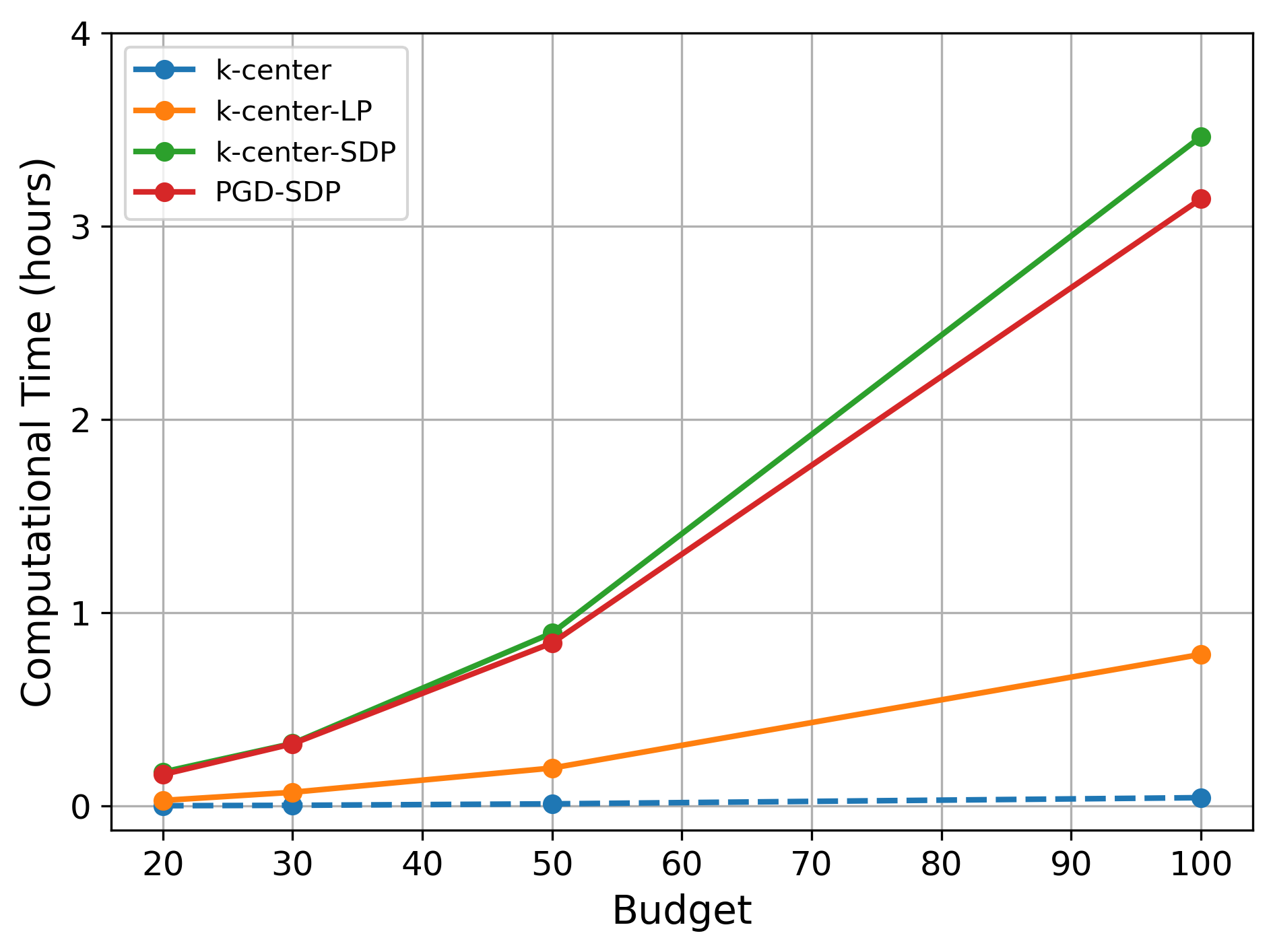}%
\vspace{-0.1cm}
\caption{Time needed to compute the value function, for different budgets, with $r$ set to 1.3.}
\label{fig:quantum:comparative:time}
\vspace{-0.35cm} 
\end{figure}

Figure~\ref{fig:quantum:comparative} highlights that, relative to the importance metric method of~\cite{qu-gaubert-2011}, the current $k$-center pruning method reduces the cost by 5-20\%, with the most pronounced improvements attained under smaller budget constraints. Among the methods considered, the SDP-based preprocessing delivers the lowest cost. Figure~\ref{fig:quantum:comparative:time} shows that the preprocessing step—whether based on LP or SDP—constitutes the main computational bottleneck relative to the $k$-center heuristic. Refining the LP preprocessing to its SDP counterpart results in a slowdown by at most a factor of 4.5, with the factor decreasing as the budget increases. This behavior arises because there is only a single SDP-type constraint, $X \in B$, whereas the number of linear constraints grows linearly with the budget. The proposed $k$-center method combined with SDP preprocessing attains a runtime comparable to that of~\cite{qu-gaubert-2011}, while achieving a lower cost (Figure~\ref{fig:quantum:comparative}). Further improvements remain possible: replacing the generic off-the-shelf SDP solver with the tailored bundle method of~\cite{Gaubert_2014} is expected to yield a comparable speedup for both methods $k$-center-SDP and PGD-SDP.

\subsection{Rochet-Chon\'e model for electricity pricing}
Pricing models often assume a hierarchical decision process between a retailer and its clients. The retailer aims to optimize an objective, such as revenue or social impact, while clients make rational decisions to maximize their utility. This structure naturally leads to a bilevel formulation, where the retailer first proposes a set of offers, and then clients respond by selecting the offer that maximizes their individual benefit. In our setting, we consider a retailer designing contracts $(q, p)$ in the context of electricity pricing. Here, $q$ represents the characteristics of the product—often interpreted as a quality vector (see, e.g.,~\cite{bergemann-yeh-zhang-2021, carlier2024general}) and $p$ denotes the corresponding price. We adopt the discretized linear-quadratic Rochet-Chon\'e model~\cite{rochet-chone-1998, ekeland-mbromberg-2009, bergemann-yeh-zhang-2021} to formulate this problem:
\begin{align} 
\max_{p, q} &\sum_{k = 1}^N \left( p_{k} - \frac{1}{2} \| {q_{k}}\|^2 \right)\rho_{{k}, N} \notag \\
\text{s.t.: } &\langle q_k, x_{k,N} \rangle - p_k \geq 0, \quad \forall k \in [N] \label{constraint:reservation},\\
& q_k \in Q, \quad \forall k \in [N] \label{constraint:available},\\
& \langle q_k, x_{k,N} \rangle - p_k \geq \langle q_l, x_{k,N} \rangle - p_l, \quad \forall k,l \in [N] \label{constraint:incentive},
\end{align}
where $N$ denotes the number of consumer segments, each grouping together individuals with similar preferences. The weight $\rho_{k, N}$ represents the proportion of consumers in segment $k$, and $x_{k,N}$ is the typical preferences of that segment. The quadratic term $\frac{1}{2}\|q_{k}\|^2$ captures the cost of providing quality $q_k$ (the quadratic norm is used for simplicity, for more realistic pricing schemes, more complex functions may be considered). The reservation constraint~\eqref{constraint:reservation} ensures that clients only accept offers that yield non-negative utility, while the availability constraint~\eqref{constraint:available} defines which offers can be proposed and which cannot. Meanwhile, the incentive compatibility constraint~\eqref{constraint:incentive} ensures that each client segment selects the most beneficial offer.

Ideally, the retailer would define a large number of segments to better represent the population and tailor offers accordingly. However, providing too many options is impractical—customers may find it difficult to compare offers, while the retailer faces challenges in management and commercialization. To address this, we apply a pruning step after solving the pricing problem, reducing the menu to a manageable number of offers. Since this reduction may leave some clients without a valid option, we explicitly include a non-participation offer that meets the reservation utility.

In our numerical experiments, we consider a linear reserve utility $R: x \mapsto \langle r, x \rangle$ to model the basic offer from the market instead of $0$. Additionally,  we use client distributions generated by EDF’s SMACH simulator~\cite{Huraux} and evaluate them in three different settings: a 2D model based on Peak and Off-Peak hours, a 3D model incorporating Peak days (white, blue, and red from \href{https://particulier.edf.fr/fr/accueil/gestion-contrat/options/tempo.html/}{EDF's Tempo contract}), and a 6D model that combines both. For each setting, we simulate a 1000 representative clients and solve the pricing problem in batches of 100 to assess average performance of the pruning in terms of revenue impact and computational effeciency. Figure~\ref{fig:pricing} presents a comparative analysis of three quantization methods: 
\begin{itemize}
\item {\em $k$-center}: The standard greedy $k$-center algorithm~\cite{Gonzalez1985ClusteringTM} applied to $\{(q_k, p_k)\}_{k=1}^N$.
\item {\em primal greedy ascent (PGA)}: The greedy algorithm from~\cite{boutsidis2015greedyminimizationweaklysupermodular} applied to the facility location model introduced in~\cite{qu-gaubert-2011}.
\item {\em primal greedy descent (PGD)}: The greedy algorithm proposed by~\cite{jacquet-ackooij-2023}.
\end{itemize}
They are evaluated under three computational budgets and in three dimensional settings: 2D, 3D, and 6D represented respectively by $\bigcirc$, $\Box$, and $\Diamond$. The experiments were conducted on the same computer without parallelization. $k$-center-LP exhibits exceptional efficiency, achieving near-optimal normalized ratios with minimal computational time, consistently across all the three dimensional settings. In comparison, PGA shows slightly lower normalized ratios and requires more computation time in 2D and 3D, with its performance further declining in 6D. Meanwhile, PGD performs better as the budget increases, which is expected since it is a descent method.

\begin{figure*}[!htp]
\vspace{0.1cm}
\centering
\begin{minipage}{0.28\textwidth} 
\centering
\includegraphics[width=0.9\textwidth]{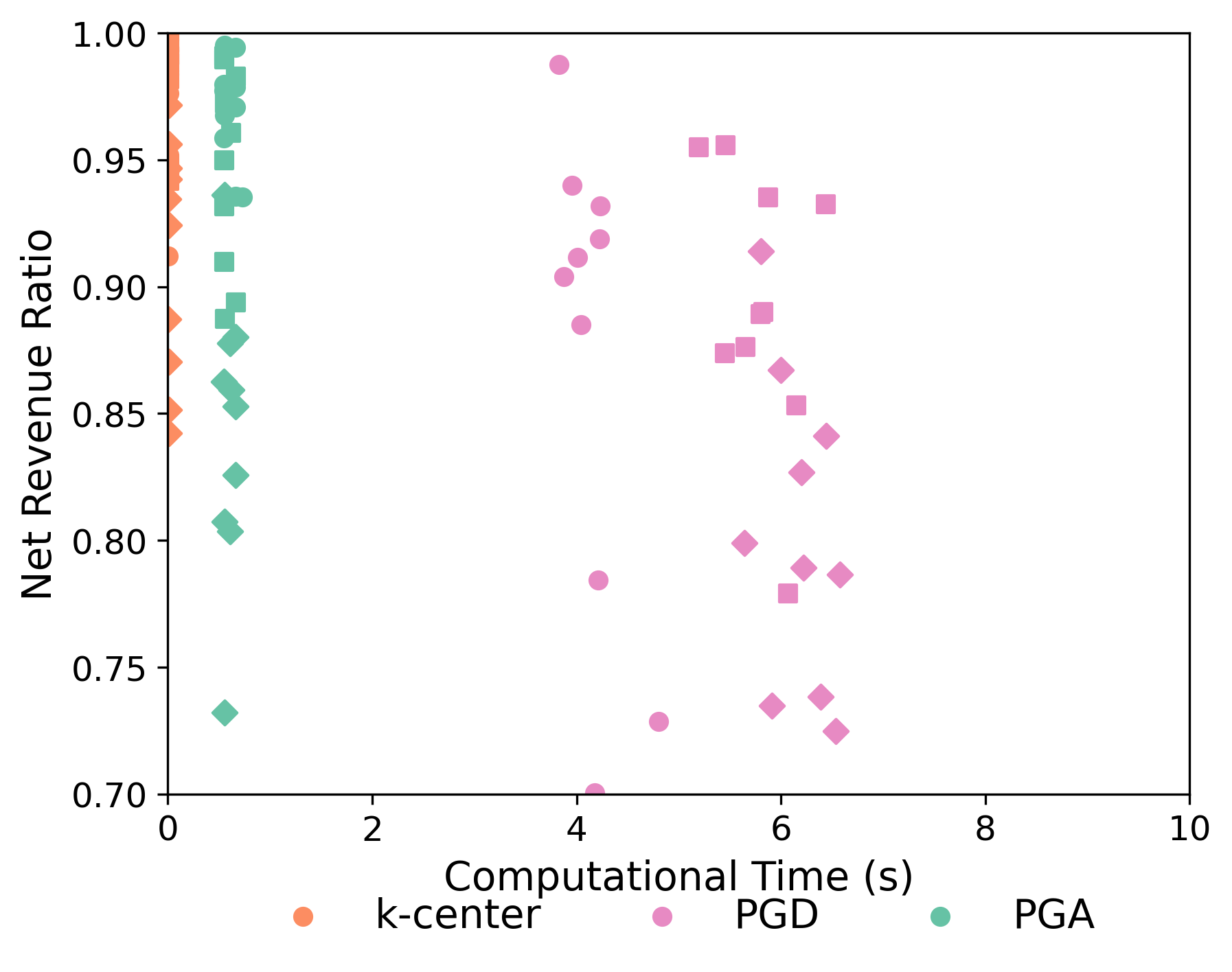}
\caption*{(a) - A budget of 10.}
\end{minipage}%
\hfill
\begin{minipage}{0.28\textwidth} 
\centering
\includegraphics[width=0.9\textwidth]{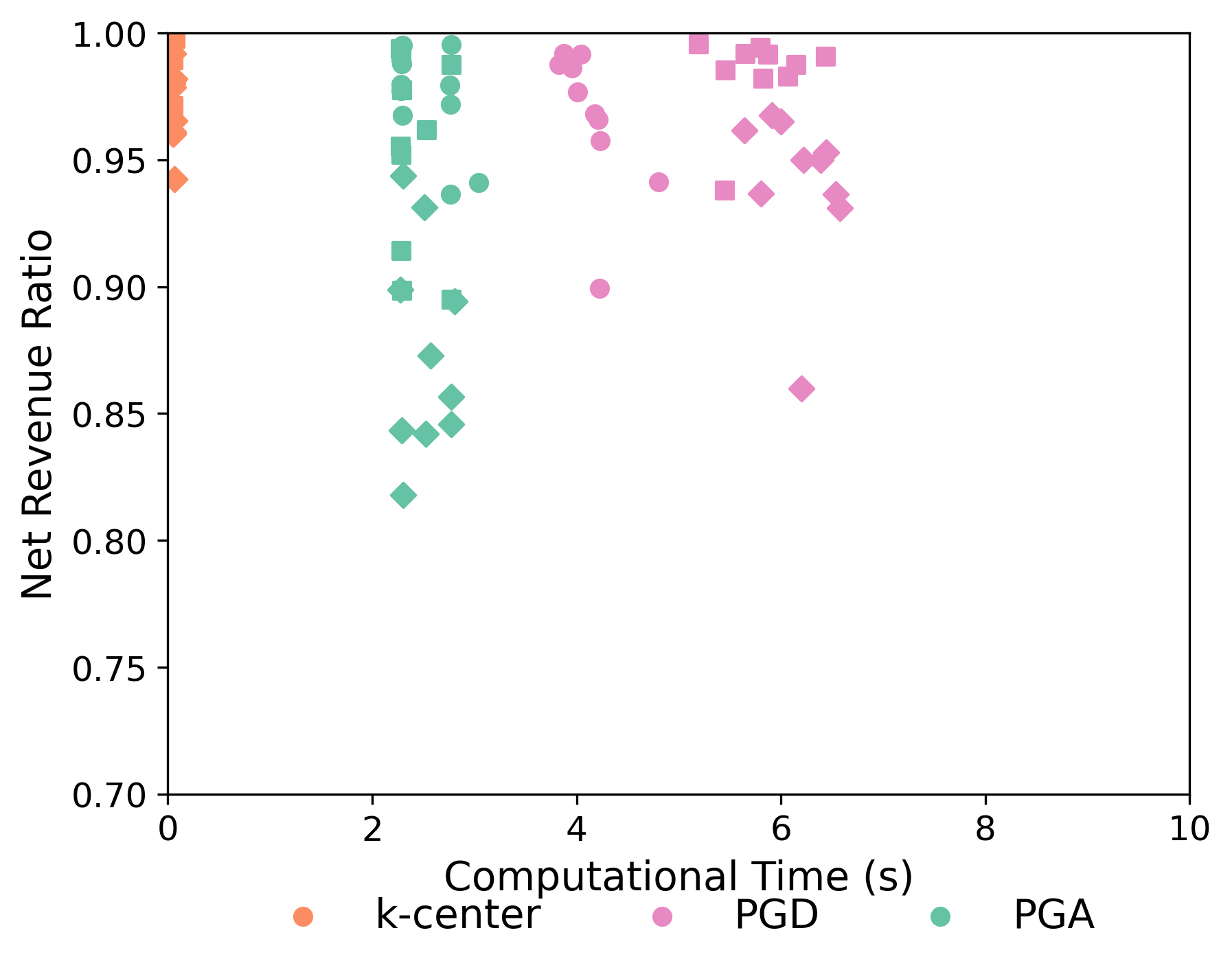}
\caption*{(b) - A budget of 25.}
\end{minipage}%
\hfill
\begin{minipage}{0.28\textwidth} 
\centering
\includegraphics[width=0.9\textwidth]{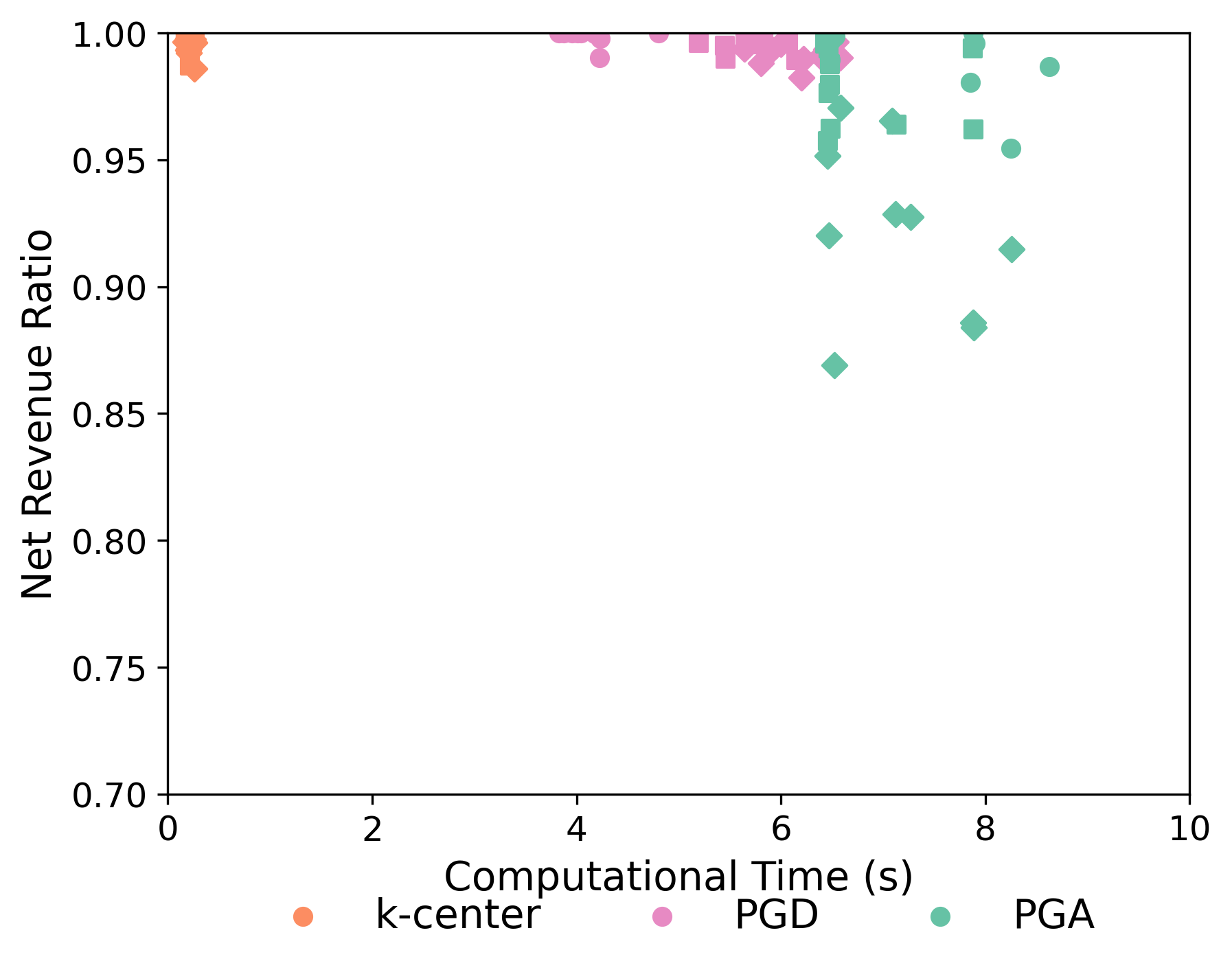}
\caption*{(c) - A budget of 50.}
\end{minipage}%
\caption{Rochet-Chon\'e problem: performance assessment of quantization methods for different budgets.}
\label{fig:pricing}
\vspace{-0.35cm} 
\end{figure*}

\section{Conclusion}
We explored the connection between polyhedral approximation and measure quantization, building on recent work by Delalande and M\'erigot. We deduced that in a certain sense, the problem of polyhedral approximation for a convex function is equivalent to the quantization problem for the Monge-Ampère measure of its Legendre–Fenchel dual.

This insight inspired a simple heuristic adapted to high dimensional instances: we prune polyhedral functions using clustering in the dual space and linear preprocessing. This has a natural geometric interpretation in terms of a lifted Newton polytope.

We validated our approach through experiments on two applications: an optimal control problem in dimension 15 (quantum gate synthesis) and a nonlinear pricing problem in electricity markets (dimension 6). Compared with earlier pruning algorithms, the present method demonstrated
improved computational efficiency and solution quality.

\bibliography{bibliography}
\bibliographystyle{ieeetr}
\end{document}